\documentclass[12pt,a4paper]{amsart}

\usepackage{amsmath,amsthm,amssymb}
%\usepackage[notcite]{showkeys}

%\usepackage{showkeys}

%%%%%%%%%%%%%%%%%%%%%%%%%%%%%%%%%%%%%%%%% \swapnumbers
\theoremstyle{plain}
\newtheorem{thm}{Theorem}

\newtheorem{lemma}{Lemma}
\newtheorem{proposition}{Proposition}
\newtheorem{corollary}{Corollary}

\theoremstyle{definition}
\newtheorem{definition}{Definition}

\theoremstyle{remark}

\newtheorem{example}{Example}
%%%%%%%%%%%%%%%%%%%%%%%%%%%%%%%%%%%%%%%%%

%%%%%%%%%%%%%% MACROS %%%%%%%%%%%%%%%%

\newcommand{\IC}{\text{${\mathbb{C}}$}}

\newcommand{\IN}{\text{${\mathbb{N}}$}}

%%%%%%%%%%%%%%%%%%%%%%%%%%%%%%

\begin{document}

\title[Generalized Fourier series ]{Generalized Fourier series by double trigonometric system}
\thanks{\it Math Subject Classifications:
  42A24 (Primary);  41A30, 42A65, 42A8 (Secondary)}

\

\

\author{K. S. Kazarian}
\thanks{ Dept. of Mathematics, Mod. 17, Universidad Aut\'onoma de Madrid, 28049, Madrid, Spain e-mail: kazaros.kazarian@uam.es}

\keywords{generalized Fourier series, multiplicative completeness, strong $x-$singularity} 

\begin{abstract}{Necessary and sufficient conditions are obtained on the function $M$ such that
$\{ M(x,y) e^{i kx}e^{i my}: (k,m)\in \Omega \}$ is complete and minimal in $L^{p}(\mathbb{T}^{2})$
when $\Omega^{c}=\{(0,0)\}$ and $\Omega^{c} = 0\times\mathbb{Z}$.
If $\Omega^{c} = 0\times\mathbb{Z}_{0},$ $\mathbb{Z}_{0} = \mathbb{Z}\setminus\{0\}$ it is proved that the system
$\{ M(x,y) e^{i kx}e^{i my}: (k,m)\in \Omega \}$ cannot be complete minimal in $L^{p}(\mathbb{T}^{2})$  for any
$M\in L^{p}(\mathbb{T}^{2})$.
In the case $\Omega^{c}=\{(0,0)\}$ necessary and conditions are found in terms of the one dimensional case.
}
\end{abstract}

\maketitle

\section{Introduction}

The present study is motivated by the desire to  extend the concept of generalized Fourier series (GFS) for functions of various variables. The concept of GFS can be described as follows.
Let $(X,\Sigma,\mu)$ be a measurable space with a positive measure $\mu,\mu(X)>0$ and let $L^{2}(X,\Sigma,\mu)$ be
the space of measurable functions $f:X\rightarrow \IC$ with the norm $ \|f\|_{2}= (\int_{X} |f(t)|^{2} dt)^{\frac{1}{2}} <\infty$.
For a complete orthonormal system $\Phi = \{\varphi_{k}\}_{k=1}^{\infty}$
\begin{equation}\label{1}
  \sum_{k=1}^{\infty} a_{k}(f) \varphi_{k} = f\quad \mbox{in}\quad L^{2}(X,\Sigma,\mu)
\end{equation}
where for any $k\in \IN$
\begin{equation}\label{2}
     a_{k}(f)  =  \int_{X}f(t) \overline{\varphi_{k}(t)} d\mu.
\end{equation}
The series (\ref{1})-(\ref{2}) is the Fourier  series of the function $f$ with respect to the system $\Phi$.
When the system $\Phi$ is the trigonometric system it is  called the Fourier series of the given function.
Representation of a given function by a trigonometric series is a classical topic (see \cite{Ri:1}, \cite{L:1} and many others).
It is well known that a measurable function can be represented by a $\Phi-$ series   where the coefficients are not defined by (\ref{2}).

 It seems something transcendental to find an algorithm that determines the coefficients $b_{k}, k\in \mathbb{N}$ such that the series $ \sum_{k=1}^{\infty} b_{k} \varphi_{k}$ represents a given function $f$ when $f$ is not integrable. Such a problem was formulated by N.N. Luzin \cite{L:1}.
 The following strategy  can be an inexhaustible source for the study of the Luzin's problem.

 At the first step fix a subset $\mathbb{N}_{1} \subset \mathbb{N}$ such that for some $m\in L^{2}(X,\Sigma,\mu)$ the system
\begin{equation}\label{3}
\{m\varphi_{k}\}_{k\in \mathbb{N}\setminus \mathbb{N}_{1}}\quad \mbox{ is complete in}\quad L^{2}(X,\Sigma,\mu).
\end{equation}
 Determine those functions $m$
for which $\{m\varphi_{k}\}_{k\in \mathbb{N}\setminus \mathbb{N}_{1}}$  is complete and minimal in $ L^{2}(X,\Sigma,\mu)$ if it is possible. Afterwards if we fix any such $m$ then the system $ \{m\varphi_{k}\}_{k\in \mathbb{N}\setminus \mathbb{N}_{1}}$ will have a unique biorthogonal system $ \{ \psi_{k}\}_{k\in \mathbb{N}\setminus \mathbb{N}_{1}}$ in $L^{2}(X,\Sigma,\mu)$. When $ \{ \psi_{k}\}_{k\in \mathbb{N}\setminus \mathbb{N}_{1}}$ is total with respect to the space $ L^{2}(X,\Sigma,\mu)$ then for any measurable function $g$ such that $mg \in L^{2}(X,\Sigma,\mu)$ one can consider the series
\begin{equation}\label{3:1}
  \sum_{k\in \mathbb{N}\setminus \mathbb{N}_{1}}  b_{k}(g) \varphi_{k}\qquad  b_{k}(g)= \int_{X}g(t) \overline{\psi_{k}(t)} d\mu, \quad k\in \mathbb{N}\setminus \mathbb{N}_{1}.
\end{equation}
The trigonometric system is the best object for testing the described idea because of its importance in various areas of mathematics.

  Generalized Fourier series and some applications were studied in \cite{KSK:1}--\cite{KSK:3} when $\mbox{card}\,\mathbb{N}_{1} <\infty$.  It is not known if the described strategy  is viable for the trigonometric system if $\mbox{card}\,\mathbb{N}_{1} = \infty$ (see \cite{KSK:4}). Any essential progress in the problem formulated below will be very helpful to clarify the question.

   We denote $\mathbb{T} = {\mathbb{R}}/{2\pi\mathbb{Z}}$ and consider  the complex form of the trigonometric system  $\{ e^{i kx}: k\in \mathbb{Z} \}  $ defined on the set $\mathbb{T}$, where the set of all integer numbers is denoted by $\mathbb{Z}$. The following theorems were proved in \cite{KSK:4}.
Let $$1\leq n_{1} < n_{2}<\cdots < n_{k} < n_{k+1} < \cdots$$
 and let 
 $$\Omega = \{-n_{k}\}_{k=1}^{\infty}\cup \{n_{k}\}_{k=1}^{\infty}$$
  be an infinite set of natural numbers such that $\Omega^{c} \neq \emptyset$, where
\[
\Omega^{c} = \mathbb{Z}\setminus \Omega = \{-m_{k}\}_{k=1}^{\infty}\cup \{0\}\cup\{m_{k}\}_{k=1}^{\infty}.
\]
 Let $S_{\Omega} = \overline{\mbox{span}}_{L^{1}(\mathbb{T})}\{ e^{i kx} : k\in \Omega^{c} \} $. If $
 p\geq 1$ then its conjugate number $p'$ is defined by the equation $\frac{1}{p} + \frac{1}{p'} =1$.\par
{\bf Theorem A}.
Let $1\leq p <\infty$ and  let $M\in L^{p}(\mathbb{T})$.\newline %
 Then the system $\{ M(x) e^{i kx}: k\in \Omega\}  $ is complete in $L^{p}(\mathbb{T})$ if and only if the following condition holds:
\begin{equation}\label{C}
 \makebox{\parbox{4.1in}{ If
 ${g}[\overline{M}]^{-1} \in L^{p'}(\mathbb{T})$ for some $g\in S_{\Omega}$   then $g(x) = 0$ a.e. }}
\end{equation}

{\bf Theorem B}. Let $1\leq p <\infty$ and  let $M\in L^{p}(\mathbb{T})$. \newline %
 The system $\{ M(x) e^{i kx}: k\in \Omega\}  $ is minimal in $L^{p}(\mathbb{T})$ if and only if the following condition holds:
\begin{equation}\label{M}
 \makebox{\parbox{4.1in}{ If for any $k\in \Omega$ there exists a function $g_{k}\in S_{\Omega}$ such that \newline
 $[e^{i kx} + g_{k}(x) ][\overline{M}]^{-1} \in L^{p'}(\mathbb{T})$.  }}
\end{equation}
The following open problem was formulated in \cite{KSK:4}.

{\bf Problem}.
Describe pairs $(\Omega,M)$ with $\mbox{card}\,\Omega^{c} =\infty$ such that conditions (\ref{C}) and (\ref{M}) hold simultaneously.

Unfortunately no any subset $\Omega \subset \mathbb{Z}, \mbox{card}\,\Omega^{c} =\infty$ is known such that the conditions (\ref{C}) and (\ref{M}) hold simultaneously. In the present paper it is shown that the similar question for the double trigonometric system has a positive answer.
It should be mentioned that for the Haar system the described strategy can be successfully implemented when $\mbox{card}\,\mathbb{N}_{1} = \infty$ (see \cite{KSK:1}, \cite{KSK:5}). First results on multiplicative completion of sets of functions were obtained in \cite{BP}, \cite{PZ:Ann}.

\section{Multiplicative completion of some subsystems of \\ the double trigonometric system}

We will consider the double trigonometric system. The $n$-multiple case can be studied in a similar way.
We suppose that $\Omega \subset \mathbb{Z}^{2}$ is an  infinite set  such that $\Omega^{c}$ is not empty, where  $\Omega^{c} = \mathbb{Z}^{2}\setminus \Omega$. In this case we modify the definition of the class
\[
S_{\Omega} = \{f\in L^{1}(\mathbb{T}^{2}): \int_{\mathbb{T}^{2}} f(x,y) e^{-i kx}e^{-i my}dxdy =0 \quad \forall\, (k,m)\in \Omega \}.
\]
 It is clear that $S_{\Omega}$ is a closed subspace of $L^{1}(\mathbb{T}^{2})$.

\begin{thm}\label{thm2:C}
Let $1\leq p <\infty$ and  let $M\in L^{p}(\mathbb{T}^{2})$. Then the system
\begin{equation}\label{eq:sys}
\{ M(x,y) e^{i kx}e^{i my}: (k,m)\in \Omega \}
\end{equation}
is complete in $L^{p}(\mathbb{T}^{2})$ if and only if the following condition holds:
\begin{equation}\label{eq:7}
 \mbox{ If
 ${g}[\overline{M}]^{-1}\in L^{p'}(\mathbb{T}^{2})$ for some $g\in S_{\Omega},$ then
    $g(x) = 0$ a.e. }
\end{equation}
\end{thm}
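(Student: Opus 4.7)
The plan is a Hahn--Banach duality argument, directly adapting the proof of Theorem A to the two-dimensional setting. Identifying $L^{p}(\mathbb{T}^{2})^{*}$ with $L^{p'}(\mathbb{T}^{2})$ via the pairing $\langle f,h\rangle=\int_{\mathbb{T}^{2}}f\,\overline{h}\,dx\,dy$, I would first observe that the system (\ref{eq:sys}) is complete in $L^{p}(\mathbb{T}^{2})$ if and only if the only $h\in L^{p'}(\mathbb{T}^{2})$ satisfying
\[
\int_{\mathbb{T}^{2}} M(x,y)\,e^{ikx}e^{imy}\,\overline{h(x,y)}\,dx\,dy=0\qquad\forall\,(k,m)\in\Omega
\]
is the zero function. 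The remaining task is to translate this annihilator condition into the form of (\ref{eq:7}).

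The decisive substitution is $g:=\overline{M}\,h$. By H\"older's inequality $g\in L^{1}(\mathbb{T}^{2})$; complex-conjugating the displayed identities shows that they are equivalent to $\int_{\mathbb{T}^{2}}g(x,y)\,e^{-ikx}e^{-imy}\,dx\,dy=0$ for every $(k,m)\in\Omega$, which is exactly the membership $g\in S_{\Omega}$. Since $g[\overline{M}]^{-1}=h$ on $\{M\neq 0\}$ and $g$ vanishes automatically where $M=0$, one has $g[\overline{M}]^{-1}\in L^{p'}(\mathbb{T}^{2})$. Conversely, given $g\in S_{\Omega}$ with $g[\overline{M}]^{-1}\in L^{p'}$, the function $h:=g[\overline{M}]^{-1}$ lies in $L^{p'}$, satisfies $\overline{M}h=g$ a.e., and annihilates the system by reversing the same computation. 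The correspondence $h\longleftrightarrow g$ is thus a bijection sending nonzero elements to nonzero elements, and the equivalence of completeness with condition (\ref{eq:7}) follows at once.

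The only place needing care --- and the closest thing to a real obstacle --- is the measure-theoretic handling of $\{M=0\}$: any $h\in L^{p'}$ supported there trivially annihilates the system, so for completeness this set must be null. As in the one-dimensional Theorem A, this is folded into the bijection above through the standard convention for $[\overline{M}]^{-1}$, and the argument poses no essential difficulty beyond the bookkeeping; the entire proof is a short paragraph of functional analysis, requiring no new ideas over the one-variable case.
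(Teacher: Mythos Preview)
Your proposal is correct and follows essentially the same duality argument as the paper: both directions rest on the substitution $g=\overline{M}\,\varphi$ (equivalently $h=g/\overline{M}$) to pass between annihilating functionals in $L^{p'}(\mathbb{T}^{2})$ and elements of $S_{\Omega}$. Your treatment of the set $\{M=0\}$ is in fact more explicit than the paper's, which simply asserts $\varphi=0$ a.e.\ without comment on this point.
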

\begin{proof} Suppose that (\ref{eq:sys}) is complete in $L^{p}(\mathbb{T}^{2})$ and let $g\in S_{\Omega}$  be a non trivial function such that $\frac{g}{\overline{M}} \in L^{p'}(\mathbb{T}^{2})$. Then for any $(k,m)\in \Omega$
\[
\int_{\mathbb{T}^{2}} M(x,y) e^{i kx}e^{i my} \frac{\overline{g(x,y)}}{M(x,y)} dxdy = \int_{\mathbb{T}^{2}} e^{i kx}e^{i my} \overline{g(x,y)} dxdy = 0.
\]
Which contradicts the completeness of the system (\ref{eq:sys}). Hence, (\ref{eq:7}) holds.

Now suppose that (\ref{eq:7}) holds and for some $\varphi \in L^{p'}(\mathbb{T}^{2})$
\[
\int_{\mathbb{T}^{2}} M(x,y) e^{i kx} e^{i my} \overline{\varphi(x,y)} dx dy = 0\qquad \mbox{for all}\quad (k,m)\in \Omega.
\]
Which yields that \newline 
$\overline{M}\varphi \in S_{\Omega}$ and $\varphi(x,y) =0$ a.e. on $\mathbb{T}^{2}$.
\end{proof}

\begin{thm}\label{thm2:M} Let $1\leq p <\infty$ and  let $M\in L^{p}(\mathbb{T}^{2})$. The system $(\ref{eq:sys})$ is minimal in $L^{p}(\mathbb{T}^{2})$ if and only if the following condition holds:
\[
\mbox{ If for any $(k,m)\in \Omega$ there exists a function $g_{k,m}\in S_{\Omega}$ such that}
\]
\begin{equation} \label{eq:8}
[e^{i kx}e^{i my} + g_{k,m}(x,y) ][\overline{M(x,y)}]^{-1} \in L^{p'}(\mathbb{T}^{2}).
\end{equation}
\end{thm}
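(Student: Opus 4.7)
The plan is to reformulate minimality of (\ref{eq:sys}) in $L^{p}(\mathbb{T}^{2})$ as the existence of a biorthogonal family in $L^{p'}(\mathbb{T}^{2})$ via Hahn--Banach, and then to translate the resulting dual functionals into the algebraic shape displayed in (\ref{eq:8}) by splitting each one into an ``exponential plus $S_{\Omega}$-remainder'' piece divided by $\overline{M}$. The standard observation is that the system is minimal iff for every $(k_{0},m_{0})\in\Omega$ there exists $\psi_{k_{0},m_{0}}\in L^{p'}(\mathbb{T}^{2})$ satisfying
\[
\int_{\mathbb{T}^{2}} M(x,y)\,e^{ikx+imy}\,\overline{\psi_{k_{0},m_{0}}(x,y)}\,dx\,dy = (2\pi)^{2}\,\delta_{(k,m),(k_{0},m_{0})},\qquad (k,m)\in\Omega,
\]
the constant on the right being fixed by a harmless rescaling of $\psi_{k_{0},m_{0}}$.

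For necessity I would set $\phi := \overline{M}\,\psi_{k_{0},m_{0}} \in L^{1}(\mathbb{T}^{2})$ (integrable by H\"older's inequality). Taking complex conjugates in the biorthogonality relation yields $\int \phi(x,y)\,e^{-ikx-imy}\,dx\,dy = (2\pi)^{2}\,\delta_{(k,m),(k_{0},m_{0})}$ for every $(k,m)\in\Omega$, which coincides with the $(k,m)$-th Fourier coefficient of $e^{ik_{0}x+im_{0}y}$. Consequently $g_{k_{0},m_{0}} := \phi - e^{ik_{0}x+im_{0}y}$ has all Fourier coefficients vanishing on $\Omega$, so it belongs to $S_{\Omega}$, and by construction $(e^{ik_{0}x+im_{0}y} + g_{k_{0},m_{0}})/\overline{M} = \phi/\overline{M} = \psi_{k_{0},m_{0}} \in L^{p'}(\mathbb{T}^{2})$, which is exactly (\ref{eq:8}).

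For sufficiency I would take the functions $g_{k,m}$ supplied by (\ref{eq:8}), define $\psi_{k,m} := [e^{ikx+imy} + g_{k,m}]/\overline{M} \in L^{p'}(\mathbb{T}^{2})$, and verify biorthogonality by direct computation: for every $(k',m')\in\Omega$,
\[
\int_{\mathbb{T}^{2}} M\,e^{ik'x+im'y}\,\overline{\psi_{k,m}}\,dx\,dy = \int_{\mathbb{T}^{2}} e^{i(k'-k)x+i(m'-m)y}\,dx\,dy + \overline{\int_{\mathbb{T}^{2}} g_{k,m}(x,y)\,e^{-ik'x-im'y}\,dx\,dy}.
\]
The first integral contributes $(2\pi)^{2}\delta_{(k,m),(k',m')}$ while the second vanishes because $g_{k,m}\in S_{\Omega}$ and $(k',m')\in\Omega$. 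Thus $\{\psi_{k,m}\}_{(k,m)\in\Omega}$ is biorthogonal to (\ref{eq:sys}), which forces minimality. No step is genuinely difficult; the only real care needed is bookkeeping signs and complex conjugates correctly when passing between $L^{p}$-orthogonality and the Fourier-coefficient-vanishing condition defining $S_{\Omega}$, and the whole argument is the two-dimensional dualization of Theorem B that mirrors the structure of the proof of Theorem \ref{thm2:C} just given.
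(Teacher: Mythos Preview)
Your proof is correct and follows essentially the same approach as the paper: both directions proceed by translating minimality into the existence of a biorthogonal family in $L^{p'}(\mathbb{T}^{2})$, then writing $\overline{M}\psi_{k,m}$ as $e^{ikx+imy}$ plus an $S_{\Omega}$-remainder (up to the $(2\pi)^{2}$ normalization, which you absorb into $\psi$ while the paper keeps it in the definition of the dual system). Your write-up is in fact slightly more explicit about integrability (the H\"older step for $\overline{M}\psi\in L^{1}$) and about the Fourier-coefficient bookkeeping than the paper's version.
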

\begin{proof} Suppose that (\ref{eq:sys}) is minimal in $L^{p}(\mathbb{T}^{2})$. Then there exists a system 
 $\{\varphi_{j,l} \}_{(j,l)\in \Omega} \subset L^{p'}(\mathbb{T}^{2}) $ such that
\[
\int_{\mathbb{T}^{2}} M(x,y) e^{i kx}e^{i my} \overline{\varphi_{j,l}(x,y)} dx dy = \delta_{kj} \delta_{ml}  \quad \forall (k,j), (m,l)\in \Omega.
\]
Hence, for any $(j,l)\in \Omega$ we have that
\[
\int_{\mathbb{T}^{2}}  e^{i kx}e^{i my} [M(x,y)\overline{\varphi_{j,l}(x,y)} - \frac{1}{(2\pi)^{2}}e^{-i jx} e^{-i ly}]dx dy = 0  \quad\forall\, (k,m)\in \Omega.
\]
Which yields $\overline{M(x,y)} \varphi_{j,l}(x)- \frac{1}{(2\pi)^{2}}e^{i jx}e^{i ly}= g_{j,l}(x,y) \in S_{\Omega}$. The proof of the necessity is finished.\newline
If (\ref{eq:8}) holds then it is easy to check that  $\{\varphi_{k,m} \}_{(k,m)\in \Omega} \subset L^{p'}(\mathbb{T}^{2}), $ where
\begin{equation}\label{eq:bisy}
\varphi_{k,m}(x) = \frac{e^{i kx}e^{i my} + g_{k,m}(x) }{(2\pi)^{2} \overline{M(x,y)}}\qquad \mbox{for}\quad (k,m)\in \Omega
\end{equation}
is biorthogonal to (\ref{eq:sys}).

\end{proof}

\subsection{The case $\Omega^{c}=\{(0,0)\}$}

\

Denote $\mathbb{Z}_{0} = \mathbb{Z} \setminus \{0\}$ and $\mathbb{Z}^{2}_{0} = \mathbb{Z}^{2} \setminus \{(0,0)\}$.
\begin{thm}\label{thm:0}
Let $1\leq p <\infty$ and  let $M\in L^{p}(\mathbb{T}^{2})$. Then the system
\begin{equation}\label{eq:sys10}
\{ M(x,y) e^{i kx}e^{i my}: (k,m)\in \mathbb{Z}^{2}_{0} \}
\end{equation}
is complete and minimal in $L^{p}(\mathbb{T}^{2})$ if and only if the systems $\{ u(t) e^{i nt}: n\in \mathbb{Z}_{0}\}$ and $\{ v(t) e^{i nt}: n\in \mathbb{Z}_{0} \}$  are complete and minimal in $L^{p}(\mathbb{T})$, where
\begin{equation}\label{w2}
\frac{1}{u(x)} = \left(\int_{\mathbb{T}} |M(x,y)|^{-p'} dy\right)^{\frac{1}{p'}}\quad \mbox{and}\quad \frac{1}{v(y)} = \left(\int_{\mathbb{T}} |M(x,y)|^{-p'} dx\right)^{\frac{1}{p'}}.
\end{equation}
\end{thm}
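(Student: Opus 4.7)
The plan is to reduce the 2D problem to its two one dimensional marginals via Fubini's theorem and then bridge the resulting 1D minimality conditions by a short algebraic identity. Since $\Omega^{c}=\{(0,0)\}$, the subspace $S_{\Omega}\subset L^{1}(\mathbb{T}^{2})$ consists of the a.e.\ constant functions, and likewise $S_{\mathbb{Z}_{0}}\subset L^{1}(\mathbb{T})$ consists of constants. So Theorems \ref{thm2:C} and \ref{thm2:M} reduce the 2D completeness and minimality of (\ref{eq:sys10}) to $\int_{\mathbb{T}^{2}}|M|^{-p'}\,dx\,dy=\infty$ together with the existence, for every $(k,m)\in\mathbb{Z}^{2}\setminus\{(0,0)\}$, of a constant $c_{k,m}$ with
\[
\int_{\mathbb{T}^{2}}|e^{ikx}e^{imy}+c_{k,m}|^{p'}\,|M(x,y)|^{-p'}\,dx\,dy<\infty.
\]
Theorems A and B do the same in one variable for the $u$- and $v$-systems: completeness becomes $\int_{\mathbb{T}}u^{-p'}=\int_{\mathbb{T}}v^{-p'}=\infty$, and minimality produces constants $a_{n}$ and $b_{n}$ for each $n\in\mathbb{Z}_{0}$ with $(e^{int}+a_{n})/u$ and $(e^{int}+b_{n})/v$ in $L^{p'}(\mathbb{T})$. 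By Fubini and the definition (\ref{w2}),
\[
\int_{\mathbb{T}^{2}}|M|^{-p'}=\int_{\mathbb{T}}u^{-p'}=\int_{\mathbb{T}}v^{-p'},
\]
so the three completeness conditions are equivalent at once.

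For the necessity of the minimality half I apply the 2D condition at $(k,m)=(n,0)$ and $(0,n)$ with $n\in\mathbb{Z}_{0}$; integrating first in the trivial variable and invoking (\ref{w2}) produces precisely the 1D minimality inequalities for $u$ and $v$, so one may take $a_{n}:=c_{n,0}$ and $b_{n}:=c_{0,n}$.

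For the sufficiency I must construct the 2D constants $c_{k,m}$ from the $a_{n}$ and $b_{n}$. The axial cases $m=0$ and $k=0$ are handled by the reverse Fubini with $c_{k,0}:=a_{k}$ and $c_{0,m}:=b_{m}$. The critical case is $k,m\in\mathbb{Z}_{0}$, where I set $c_{k,m}:=-a_{k}b_{m}$ and use the identity
\[
e^{ikx}e^{imy}-a_{k}b_{m}=(e^{ikx}+a_{k})(e^{imy}+b_{m})-a_{k}(e^{imy}+b_{m})-b_{m}(e^{ikx}+a_{k}).
\]
Since $|e^{ikx}+a_{k}|$ and $|e^{imy}+b_{m}|$ are uniformly bounded, the triangle inequality yields
\[
|e^{ikx}e^{imy}+c_{k,m}|^{p'}\leq C_{k,m}\bigl(|e^{ikx}+a_{k}|^{p'}+|e^{imy}+b_{m}|^{p'}\bigr),
\]
and integrating against $|M|^{-p'}$ over $\mathbb{T}^{2}$, then applying Fubini, reduces the 2D integral to the sum of the two 1D minimality integrals, which are finite. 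This mixed frequency case is where I expect the main difficulty to sit: the 1D conditions produce constants adapted to $u(x)$ and $v(y)$ separately, and only the factorization above lets one manufacture a single constant $c_{k,m}$ that tames $e^{ikx}e^{imy}$ against $|M(x,y)|^{-p'}$.
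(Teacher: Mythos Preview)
Your proposal is correct and follows the same overall route as the paper: reduce both the 2D and the 1D problems to weighted integrability conditions via Theorems \ref{thm2:C}--\ref{thm2:M} (and their 1D analogues), and link the two levels by Fubini and the identity $\int_{\mathbb{T}}u^{-p'}=\int_{\mathbb{T}^{2}}|M|^{-p'}=\int_{\mathbb{T}}v^{-p'}$. The necessity argument (restrict to axial frequencies $(n,0)$ and $(0,n)$) is exactly what the paper does.

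The one genuine difference is in the sufficiency for mixed frequencies $k,m\neq 0$. The paper does not argue with arbitrary 1D constants $a_k,b_m$; instead it invokes the known one-dimensional structure (Proposition~3 of \cite{KSK:5}) which forces $a_k=-e^{ikx_0}$ and $b_m=-e^{imy_0}$ for some fixed $(x_0,y_0)$, and then uses the simpler bound $|e^{ikx}e^{imy}-e^{ikx_0}e^{imy_0}|\le |e^{ikx}-e^{ikx_0}|+|e^{imy}-e^{imy_0}|$ (this step is actually written out in the proof of Theorem~\ref{thm:1}). Your factorization
\[
e^{ikx}e^{imy}-a_kb_m=(e^{ikx}+a_k)(e^{imy}+b_m)-a_k(e^{imy}+b_m)-b_m(e^{ikx}+a_k)
\]
achieves the same conclusion without appealing to the unimodular form of the 1D constants, so your sufficiency argument is slightly more self-contained. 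Conversely, the paper's route yields the extra information that the 2D constant may always be chosen of the form $e^{ikx_0}e^{imy_0}$. One small omission in your write-up: in the necessity direction you should record, as the paper does via H\"older, that $u,v\in L^{p}(\mathbb{T})$ (and are positive a.e.), since otherwise the 1D statement has no content.
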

\begin{proof}
By Theorem \ref{thm2:C} it follows that the system (\ref{eq:sys10}) is complete in $L^{p}(\mathbb{T}^{2})$ if and only if
\begin{equation}\label{c0}
\int_{\mathbb{T}^{2}}|M(x,y)|^{-p'} dxdy = +\infty.
\end{equation}
Hence, by  Theorem \ref{thm2:M} the system (\ref{eq:sys10}) is  minimal  in $L^{p}(\mathbb{T}^{2})$ if there exist unique numbers $a_{kl}\in \mathbb{C}$, $(k,l)\in \mathbb{Z}_{0}$ such that
\begin{equation}\label{m0}
\int_{\mathbb{T}^{2}}|e^{ikx} e^{il y} - a_{kl} |^{p'} |M(x,y)|^{-p'} dxdy < +\infty \quad \mbox{for any}\quad (k,l)\in \mathbb{Z}_{0}.
\end{equation}
We consider (\ref{m0})  respectively for  $(k,0)$ and $(0,l)$, where $k$ and $l$ belong to $\mathbb{Z}_{0}. $ By
 the Fubini-Tonelli theorem it follows that the functions $u$ and $v$ are positive a.e. on $\mathbb{T}$. On the other hand we have that for almost any $x\in \mathbb{T}$
 \[
 2\pi \leq \left(\int_{\mathbb{T}} |M(x,y)|^{p} dy\right)^{\frac{1}{p}} \left(\int_{\mathbb{T}} |M(x,y)|^{-p'} dy\right)^{\frac{1}{p'}},
\]
which yields
\[
\int_{\mathbb{T}} u(x)^{p} dx \leq \frac{1}{(2\pi)^{p}} \int_{\mathbb{T}^{2}} |M(x,y)|^{p} dxdy.
\]
 Similarly we obtain that $v\in L^{p}(\mathbb{T})$. Afterwards by (\ref{c0}) and (\ref{m0}) we easily obtain that there exists $x_{0}\in \mathbb{T}$ such that
\begin{equation}\label{cm}
\int_{\mathbb{T}} u(x)^{-p'}  dx = +\infty \quad \mbox{and}\quad \int_{\mathbb{T}} |e^{ikx}  -  e^{ikx_{0}} |^{p'} u(x)^{-p'}  dx < +\infty.
\end{equation}
By Proposition 3 of \cite{KSK:5} it follows that  the system $\{ u(t) e^{i nt}: n\in \mathbb{Z}_{0} \}$ is complete and minimal in $L^{p}(\mathbb{T})$. Similarly we obtain that $\{ v(t) e^{i nt}: n\in \mathbb{Z}_{0} \}$ is complete and minimal in $L^{p}(\mathbb{T})$.

\end{proof}

The following theorem gives another characterization.
\begin{thm}\label{thm:1}
Let $1\leq p <\infty$ and  let $M\in L^{p}(\mathbb{T}^{2})$. Then the system $(\ref{eq:sys10})$
is complete and minimal in $L^{p}(\mathbb{T}^{2})$ if and only if  holds $(\ref{c0})$ and
\[
 \int_{\mathbb{T}^{2}}|\sin\frac{x-x_{0}}{2}|^{p'} |M(x,y)|^{-p'}dxdy< +\infty,
\]
\[
\int_{\mathbb{T}^{2}}|\sin\frac{y-y_{0}}{2}|^{p'} |M(x,y)|^{-p'}dxdy< +\infty.
\]
for some $(x_{0},y_{0})\in \mathbb{T}^{2}$.
\end{thm}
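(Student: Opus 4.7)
The plan is to reduce everything to Theorems \ref{thm2:C} and \ref{thm2:M}, exploiting that $\Omega^{c}=\{(0,0)\}$ forces $S_{\Omega}$ to consist of the constant functions (a function in $L^{1}(\mathbb{T}^{2})$ with vanishing Fourier coefficients at every $(k,m)\in\mathbb{Z}^{2}_{0}$ is a.e.\ constant). Under this identification, condition (\ref{eq:7}) collapses to (\ref{c0}) and condition (\ref{eq:8}) reads: for every $(k,l)\in\mathbb{Z}^{2}_{0}$ there exists a constant $c_{k,l}\in\mathbb{C}$ such that
\[
\int_{\mathbb{T}^{2}}|e^{ikx}e^{ily}-c_{k,l}|^{p'}|M(x,y)|^{-p'}\,dxdy<+\infty.
\]

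For necessity I would specialise the reformulated minimality condition to $(k,l)=(1,0)$, obtaining some $c_{1,0}$ for which the integral is finite. The pointwise lower bound $|e^{ix}-c_{1,0}|\ge||c_{1,0}|-1|$ combined with (\ref{c0}) forces $|c_{1,0}|=1$, so $c_{1,0}=e^{ix_{0}}$; the identity $|e^{ix}-e^{ix_{0}}|=2|\sin\frac{x-x_{0}}{2}|$ then yields the first displayed inequality. The case $(k,l)=(0,1)$ is symmetric and produces $y_{0}$.

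For sufficiency I would set $c_{k,l}:=e^{i(kx_{0}+ly_{0})}$ for each $(k,l)\in\mathbb{Z}^{2}_{0}$ and estimate the integrand via the triangle inequality
\[
|e^{ikx}e^{ily}-e^{i(kx_{0}+ly_{0})}|\le|e^{ikx}-e^{ikx_{0}}|+|e^{ily}-e^{ily_{0}}|,
\]
together with the identity $|e^{in\theta}-1|=2|\sin\frac{n\theta}{2}|$ and the elementary bound $|\sin(n\theta)|\le|n||\sin\theta|$ for $n\in\mathbb{Z}$ (immediate induction from $\sin((n+1)\theta)=\sin n\theta\cos\theta+\cos n\theta\sin\theta$). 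Coupled with $(a+b)^{p'}\le 2^{p'}(a^{p'}+b^{p'})$, the two hypothesised integrals dominate the integral for $(k,l)$ up to a factor depending only on $k,l$, so minimality follows from Theorem \ref{thm2:M}. Completeness is immediate from (\ref{c0}) and Theorem \ref{thm2:C}.

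I do not anticipate a substantial obstacle. The one delicate point is observing that (\ref{c0}) pins $c_{1,0}$ to the unit circle, which simultaneously produces a candidate $x_{0}$ valid for every $k\in\mathbb{Z}$ and, together with the analogous $y_{0}$, removes any need to handle the mixed indices $(k,l)$ with $kl\ne 0$ separately.
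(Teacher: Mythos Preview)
Your proposal is correct and follows the same route as the paper. For sufficiency the paper also splits $e^{ikx}e^{ily}-e^{ikx_{0}}e^{ily_{0}}$ by the triangle inequality (it uses Minkowski in $L^{p'}$ rather than your pointwise bound $(a+b)^{p'}\le 2^{p'}(a^{p'}+b^{p'})$, and leaves the reduction $|e^{ikx}-e^{ikx_{0}}|\le 2|k|\,|\sin\frac{x-x_{0}}{2}|$ implicit); for necessity the paper simply refers back to the proof of the preceding theorem, whereas you give the clean direct argument via $(k,l)=(1,0),(0,1)$ and the observation that (\ref{c0}) forces $|c_{1,0}|=|c_{0,1}|=1$.
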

\begin{proof}
We skip the proof of the  necessity because the arguments are similar to those used in the proof of the previous theorem.
To finish the proof we have to check the relations (\ref{m0}) for $a_{kl} = e^{ikx_{0}} e^{il y_{0}} $.
Write
\begin{eqnarray*}
 &\left(\int_{\mathbb{T}^{2}} |e^{ikx} e^{il y}  - e^{ikx_{0}} e^{il y_{0}} |^{p'} |M(x,y)|^{-p'} dxdy \right)^{\frac{1}{p'}}\\
\leq &\left(\int_{\mathbb{T}^{2}} | e^{il y}  -  e^{il y_{0}}|^{p'} |M(x,y)|^{-p'} dxdy\right)^{\frac{1}{p'}}\\
+ &\left(\int_{\mathbb{T}^{2}} |e^{ikx}   - e^{ikx_{0}} |^{p'} |M(x,y)|^{-p'} dxdy\right)^{\frac{1}{p'}} < +\infty.
\end{eqnarray*}
\end{proof}

\begin{corollary}
Let $1\leq p <\infty$ and  let $M\in L^{p}(\mathbb{T}^{2})$. Then for any $(\nu, \mu)\in \mathbb{Z}^{2}$  the system
\[
\{ M(x,y) e^{i kx}e^{i my}: (k,m)\in \mathbb{Z}^{2}\setminus (\nu, \mu) \}
\]
is complete and minimal in $L^{p}(\mathbb{T}^{2})$ if and only if the system
\[
\{ M(x,y) e^{i kx}e^{i my}: (k,m)\in \mathbb{Z}^{2}_{0} \}
\]
is complete and minimal in $L^{p}(\mathbb{T}^{2})$ .
\end{corollary}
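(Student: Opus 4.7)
The plan is to reduce the statement to a direct application of Theorem~\ref{thm:0} (or equivalently Theorem~\ref{thm:1}) by an index-shift trick. Set
\[
\tilde M(x,y) := M(x,y)\, e^{i\nu x}e^{i\mu y}.
\]
Since $|e^{i\nu x}e^{i\mu y}|=1$, one has $|\tilde M|=|M|$ pointwise, so $\tilde M\in L^p(\mathbb{T}^2)$. Moreover
\[
M(x,y)\,e^{ikx}e^{imy} \;=\; \tilde M(x,y)\,e^{i(k-\nu)x}e^{i(m-\mu)y},
\]
and as $(k,m)$ runs through $\mathbb{Z}^2\setminus\{(\nu,\mu)\}$ the shifted pair $(k-\nu,m-\mu)$ runs through $\mathbb{Z}^2_0$. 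Thus, after this relabeling, the system $\{M(x,y)e^{ikx}e^{imy}:(k,m)\in\mathbb{Z}^2\setminus\{(\nu,\mu)\}\}$ is \emph{literally the same} system as $\{\tilde M(x,y)e^{ik'x}e^{im'y}:(k',m')\in\mathbb{Z}^2_0\}$.

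The proof then reduces to checking that $\tilde M$ and $M$ produce equivalent systems under Theorem~\ref{thm:0}. That criterion is phrased entirely through the weights
\[
\tfrac{1}{u(x)}=\Bigl(\!\int_{\mathbb{T}}|M(x,y)|^{-p'}dy\Bigr)^{1/p'},\qquad \tfrac{1}{v(y)}=\Bigl(\!\int_{\mathbb{T}}|M(x,y)|^{-p'}dx\Bigr)^{1/p'},
\]
which depend on $|M|$ only. Since $|\tilde M|=|M|$, the associated one-dimensional weights coincide, so the one-dimensional systems $\{u(t)e^{int}:n\in\mathbb{Z}_0\}$ and $\{v(t)e^{int}:n\in\mathbb{Z}_0\}$ attached to $\tilde M$ are identical to those attached to $M$. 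By Theorem~\ref{thm:0} the two-dimensional systems $\{Me^{ikx}e^{imy}:(k,m)\in\mathbb{Z}^2_0\}$ and $\{\tilde Me^{ik'x}e^{im'y}:(k',m')\in\mathbb{Z}^2_0\}$ are therefore simultaneously complete and minimal in $L^p(\mathbb{T}^2)$, which yields the claimed equivalence.

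No serious obstacle appears in this argument; the only point to be mindful of is that $e^{i\nu x}e^{i\mu y}$ is unimodular, which ensures both that $\tilde M$ belongs to $L^p(\mathbb{T}^2)$ so Theorem~\ref{thm:0} is applicable and that the weights $u,v$ are unchanged. As an alternative formulation of the same idea, one may observe that multiplication by $e^{-i\nu x}e^{-i\mu y}$ is an isometric isomorphism of $L^p(\mathbb{T}^2)$ onto itself; since isometric isomorphisms preserve both completeness and minimality, the conclusion follows without even invoking Theorem~\ref{thm:0} explicitly.
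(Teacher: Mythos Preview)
Your argument is correct and follows essentially the same route as the paper: an index shift $(k,m)\mapsto(k-\nu,m-\mu)$ together with the observation that the complete-and-minimal criterion is invariant when $M$ is multiplied by a unimodular function. The paper invokes this invariance for conditions~(\ref{eq:7}) and~(\ref{eq:8}) of Theorems~\ref{thm2:C} and~\ref{thm2:M}, whereas you invoke it for the $|M|$-dependent weights of Theorem~\ref{thm:0}; your final remark about the isometric isomorphism is in fact the cleanest version of the same idea.
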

The assertion of the corollary is obvious because the multiplying the elements of the system  $\{  e^{i kx}e^{i my}: (k,m)\in \mathbb{Z}^{2}\setminus (\nu, \mu) \}$ by $e^{-i \nu x}e^{-i \mu y}$
 we obtain the system $\{ e^{i kx}e^{i my}: (k,m)\in \mathbb{Z}^{2}_{0} \}.$ On the other hand it is easy to observe that in our case the conditions (\ref{eq:7}),(\ref{eq:8}) remain true if $M(x,y)$ is multiplied by a function with modulus equal to one almost everywhere.

\begin{example}
Let $1\leq p <\infty,$ $(x_{0},y_{0})\in \mathbb{T}^{2}$ and  let 
$$ M(x,y) = |\sin\frac{x-x_{0}}{2}|^{\alpha} + |\sin\frac{y-y_{0}}{2}|^{\alpha},$$
 where $\frac{1}{p'} \leq \alpha < 1+ \frac{1}{p'}$. Then the system $\{ e^{i kx}e^{i my}: (k,m)\in \mathbb{Z}^{2}_{0} \}$ is complete and minimal in $L^{p}(\mathbb{T}^{2})$.
\end{example}

\subsection{The case $\Omega^{c} = 0\times\mathbb{Z}$}

\

Further in this section it is supposed that $\Omega \subset \mathbb{Z}^{2}$ is such that $\Omega^{c} = 0\times \mathbb{Z}$.%
\begin{lemma}\label{lem3.1}%
Let $g\in S_{\Omega }$ then $g(x,y) = h(y)$, where $h\in L^{1}(\mathbb{T})$.
\end{lemma}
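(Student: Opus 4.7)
The plan is to unfold the definition of $S_\Omega$ in terms of Fourier coefficients and then reconstruct $g$ by averaging in $x$. Since $\Omega^c = \{0\}\times\mathbb{Z}$, the condition $g\in S_\Omega$ says exactly that the Fourier coefficient
\[
\widehat{g}(k,m) \;=\; \int_{\mathbb{T}^2} g(x,y)\, e^{-ikx} e^{-imy}\,dx\,dy
\]
vanishes whenever $k\neq 0$. This is the whole content of the hypothesis, and the Fourier coefficients with index $(0,m)$ are left unconstrained.

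My proposal is to set
\[
h(y) \;:=\; \frac{1}{2\pi}\int_{\mathbb{T}} g(x,y)\,dx,
\]
which by Fubini--Tonelli makes sense for a.e.\ $y\in\mathbb{T}$ and defines an element of $L^1(\mathbb{T})$ with $\|h\|_1 \le \|g\|_1/(2\pi)$. Viewing $h$ as a function on $\mathbb{T}^2$ constant in $x$, I would compute its double Fourier coefficients: for $k\neq 0$ the integration in $x$ gives zero, and for $k=0$ Fubini gives $\widehat{h}(0,m) = 2\pi\,\widehat{h_{\mathrm{1d}}}(m) = \widehat{g}(0,m)$, where $\widehat{h_{\mathrm{1d}}}$ denotes the one-dimensional Fourier coefficient of $h$. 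Hence the two $L^1(\mathbb{T}^2)$ functions $g$ and $h$ have identical Fourier coefficients on $\mathbb{Z}^2$.

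The conclusion then comes from the uniqueness of Fourier coefficients in $L^1(\mathbb{T}^2)$: $g(x,y)=h(y)$ a.e., and $h\in L^1(\mathbb{T})$ as noted. There is no real obstacle here; the only thing to keep track of is the bookkeeping of the $2\pi$ normalization when matching the double Fourier coefficient $\widehat{g}(0,m)$ to the one-dimensional Fourier coefficient of the averaged function $h$. If one wished to avoid invoking uniqueness directly, one could instead argue that $g-h$ lies in $S_\Omega$ and has all $(0,m)$-coefficients equal to zero, so every Fourier coefficient vanishes, which by the standard density of trigonometric polynomials in $C(\mathbb{T}^2)$ forces $g-h=0$ a.e.
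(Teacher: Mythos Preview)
Your proposal is correct and follows essentially the same approach as the paper: define $h(y)=\frac{1}{2\pi}\int_{\mathbb{T}}g(x,y)\,dx$, observe $h\in L^1(\mathbb{T})$, and check that $g-h$ has all double Fourier coefficients equal to zero. The paper states exactly this computation (with slightly less detail on the normalization), so your alternative phrasing via uniqueness of Fourier coefficients and your remark that one can equivalently argue all coefficients of $g-h$ vanish are both in line with the original.
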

\begin{proof}%
Let
\[
h(y) = \frac{1}{2\pi}\int_{\mathbb{T}} g(x,y) dx.
\]
Then $h\in L^{1}(\mathbb{T})$ and  for any $k\in \mathbb{Z}$
\[
c_{m}(h) = \frac{1}{2\pi}\int_{\mathbb{T}} h(y) e^{-i my} dy = \frac{1}{(2\pi)^{2}}\int_{\mathbb{T}^{2}} g(x,y)e^{-i my} dx dy.
\]
It is easy to check that for any $(k,m)\in \mathbb{Z}^{2}$
\[
\frac{1}{(2\pi)^{2}}\int_{\mathbb{T}^{2}} [g(x,y) -h(y)] e^{-i kx}e^{-i my} dx dy = 0.
\]
\end{proof}

 \begin{definition}\label{df2:1}
 Let $M\in L^{p}(\mathbb{T}^{2})$ and $1\leq p < \infty$.  We say that the   function $M(x,y)$ has a strong $x-$singular\-ity of
 degree $p\, (1\leq p <\infty)$   if for any  measurable set $E\subset \mathbb{T}, |E|>0$
 \[
 M^{-1}\notin L^{p'}(\mathbb{T}\times E), \qquad \frac{1}{p} + \frac{1}{p'} =1.
 \]
\end{definition}

\begin{proposition}\label{pr2:C}
Let $1\leq p <\infty$ and  let $M\in L^{p}(\mathbb{T}^{2})$.   Then the system
\begin{equation}\label{eq:sys0}
\{ M(x,y) e^{i kx}e^{i my}: (k,m)\in \Omega \}
\end{equation}
is complete in $L^{p}(\mathbb{T}^{2})$ if and only if  $M(x,y)$ has  a strong $x-$singularity of
 degree $p$.
\end{proposition}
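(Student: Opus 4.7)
The plan is to combine Theorem \ref{thm2:C} with Lemma \ref{lem3.1}, which together reduce the completeness condition to a statement purely about $h \in L^{1}(\mathbb{T})$, and then match this statement with the definition of a strong $x$-singularity.

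First I would invoke Theorem \ref{thm2:C}: the system (\ref{eq:sys0}) is complete in $L^{p}(\mathbb{T}^{2})$ exactly when every $g \in S_{\Omega}$ with $g/\overline{M} \in L^{p'}(\mathbb{T}^{2})$ vanishes a.e. By Lemma \ref{lem3.1} every such $g$ has the form $g(x,y) = h(y)$ with $h \in L^{1}(\mathbb{T})$. Moreover, the reverse inclusion is immediate from Fubini together with $\int_{\mathbb{T}} e^{-ikx}\,dx = 0$ for $k\neq 0$: any function depending only on $y$ already lies in $S_{\Omega}$ (because $\Omega^{c} = \{0\}\times\mathbb{Z}$). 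Hence the completeness criterion becomes
\[
h\in L^{1}(\mathbb{T}),\quad \int_{\mathbb{T}^{2}} |h(y)|^{p'}|M(x,y)|^{-p'}\,dxdy <\infty \;\Longrightarrow\; h\equiv 0 \text{ a.e.}
\]

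For the necessity direction, I would argue by contrapositive: assuming $M$ does not possess a strong $x$-singularity of degree $p$, there exists a measurable $E \subset \mathbb{T}$ with $|E| > 0$ such that $\int_{\mathbb{T}\times E} |M(x,y)|^{-p'}\,dxdy < \infty$. Setting $h(y) = \chi_{E}(y)$, which is a nonzero element of $L^{1}(\mathbb{T})$, produces a nontrivial $g(x,y) = \chi_{E}(y)\in S_{\Omega}$ with $g/\overline{M}\in L^{p'}(\mathbb{T}^{2})$, contradicting completeness by Theorem \ref{thm2:C}.

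For the sufficiency direction, assume $M$ has a strong $x$-singularity of degree $p$ and suppose for contradiction that some $h\in L^{1}(\mathbb{T})$ not identically zero satisfies the finite integral condition above. Decomposing $\mathbb{T} = \bigcup_{n\ge 1} E_{n}$ with $E_{n} = \{y\in\mathbb{T}: |h(y)| > 1/n\}$, at least one $E_{n}$ has positive measure. On $\mathbb{T}\times E_{n}$ one has
\[
\int_{\mathbb{T}\times E_{n}} |M(x,y)|^{-p'}\,dxdy \le n^{p'} \int_{\mathbb{T}^{2}} |h(y)|^{p'}|M(x,y)|^{-p'}\,dxdy <\infty,
\]
contradicting the strong $x$-singularity hypothesis applied to $E = E_{n}$.

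The substantive content is entirely in Lemma \ref{lem3.1}, which I am allowed to assume; once that structural description of $S_{\Omega}$ is in hand, the remainder consists of matching the integrability condition with the definition, and the only place that requires minor care is reducing a general nonzero $h$ to a constant lower bound on a set of positive measure, which is handled by the standard decomposition via level sets.
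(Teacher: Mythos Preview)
Your proof is correct and follows essentially the same route as the paper: both directions combine Theorem~\ref{thm2:C} with Lemma~\ref{lem3.1} to reduce to functions $h(y)$, use $h=\chi_{E}$ for the necessity, and for sufficiency argue that a nonzero $h$ would force $|M|^{-p'}$ to be integrable over $\mathbb{T}\times\{|h|>0\}$. Your level-set argument $E_{n}=\{|h|>1/n\}$ makes explicit a step the paper leaves terse, but the underlying idea is the same.
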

\begin{proof}
Suppose that the   function $M(x,y)$ has  a strong $x-$singularity of
 degree $p\, (1\leq p <\infty)$.    If for some $g\in S_{\Omega}$ we have that $\frac{g}{\overline{M}} \in L^{p'}(\mathbb{T}^{2})$ then by Lemma \ref{lem3.1}  it follows that
\begin{equation}\label{eq:CC}
 \int_{\mathbb{T}^{2}}  |g(x,y)|^{p'}  |M(x,y)|^{-p'} dx dy = \int_{\mathbb{T}} |h(y)|^{p'}\int_{\mathbb{T}} |M(x,y)|^{-p'} dx dy <+\infty.
\end{equation}
Hence, the set $G=\{y\in \mathbb{T}: |h(y)|> 0\}$ should be of measure zero. Which yields that
$g(y) =0$ a.e. on $\mathbb{T}$ and by Theorem \ref{thm2:C} follows that the system (\ref{eq:sys0}) is complete in $ L^{p}(\mathbb{T}^{2})$.

For the proof of the necessity suppose that the system (\ref{eq:sys0}) is complete in $ L^{p}(\mathbb{T}^{2})$.  Hence, by Theorem \ref{thm2:C} we have that for any non trivial $g\in S_{\Omega}$
\[
 \int_{\mathbb{T}^{2}} \frac{|g(y)|^{p'}}{|M(x,y)|^{p'}} dx dy = +\infty.
\]
For any measurable set $E\subset \mathbb{T}, |E|>0$ we have that $\chi_{E}(y) \in S_{\Omega}$ which yields that $M(x,y)$ has  a strong $x-$singularity of
 degree $p$.
\end{proof}
For our further study we define a class of functions $\Upsilon$.

 \begin{definition}\label{df2:2}
  We say that $\phi \in \Upsilon$ if  $ \phi \in L^{\infty}(\mathbb{T})$ and $ \frac{1}{\phi} \in L^{\infty}(\mathbb{T})$.
\end{definition}

\begin{definition}\label{df2:3}
    We say that a   function $M\in L^{p}(\mathbb{T}^{2})$ has an $(x,P)-$sin\-gula\-rity of
 degree $p\, (1\leq p <\infty)$   if
 $M^{-1}\notin L^{p'}(\mathbb{T}^{2})$ and
 \begin{equation}\label{eq:xP}
 \int_{\mathbb{T}^{2}} \frac{|e^{ix} -P(y)|^{p'}}{|M(x,y)|^{p'}} dx dy < +\infty,
 \end{equation}
 where $P\in \Upsilon$.
\end{definition}

\begin{proposition}\label{pr2:M}
Let $1\leq p <\infty$ and  let $M\in L^{p}(\mathbb{T}^{2})$.  Then the system (\ref{eq:sys0})
is minimal in $L^{p}(\mathbb{T}^{2})$ if and only if one of the following conditions hold:
\begin{equation}\label{eq:pe}
 \int_{\mathbb{T}^{2}} |M(x,y)|^{-p'} dx dy < +\infty
\end{equation}
or the  function $M(x,y)$ has an $(x,P)-$singularity of
 degree $p$.
\end{proposition}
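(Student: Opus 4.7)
The plan is to combine the general minimality criterion of Theorem~\ref{thm2:M} with the structural description of $S_\Omega$ given by Lemma~\ref{lem3.1}. Since $(k,m)\in\Omega$ if and only if $k\neq 0$, minimality is equivalent to: for every $k\in\mathbb{Z}_0$ and every $m\in\mathbb{Z}$ there exists $h_{k,m}\in L^1(\mathbb{T})$ such that
\[
\int_{\mathbb{T}^2}\bigl|e^{ikx}e^{imy}+h_{k,m}(y)\bigr|^{p'}|M(x,y)|^{-p'}\,dx\,dy<+\infty.
\]
So the proof reduces to analysing this single family of scalar inequalities.

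For the sufficiency I would treat the two cases separately. Under (\ref{eq:pe}) the choice $h_{k,m}\equiv 0$ works immediately since $|e^{ikx}e^{imy}|=1$. Under the $(x,P)$-singularity hypothesis I would take $h_{k,m}(y)=-e^{imy}P(y)^k$, so the integrand becomes $|e^{ikx}-P(y)^k|^{p'}|M|^{-p'}$. Using the identity $z^k-w^k=(z-w)\sum_{j=0}^{k-1}z^{k-1-j}w^j$ for $k\geq 1$ together with $e^{-i\ell x}-P^{-\ell}=-P^{-\ell}e^{-i\ell x}(e^{i\ell x}-P^\ell)$ for $k=-\ell<0$, the hypotheses $\|P\|_\infty<\infty$ and $\|1/P\|_\infty<\infty$ yield $|e^{ikx}-P(y)^k|\leq C_k|e^{ix}-P(y)|$ for every $k\neq 0$, and the required finiteness then follows from (\ref{eq:xP}).

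For the necessity I would apply minimality with $(k,m)=(1,0)$ to obtain $h\in L^1(\mathbb{T})$ with $\int|e^{ix}+h(y)|^{p'}|M|^{-p'}<+\infty$, and set $P_0:=-h$. If $\int|M|^{-p'}<+\infty$ we are in case (\ref{eq:pe}); otherwise one must produce a representative belonging to the class $\Upsilon$. My plan is to truncate: set $P(y)=P_0(y)$ on $E:=\{y:\tfrac12\leq|P_0(y)|\leq 2\}$ and $P(y)=1$ on $\mathbb{T}\setminus E$, so automatically $P\in\Upsilon$ with $\|P\|_\infty,\|1/P\|_\infty\leq 2$. The essential observation is that on $\mathbb{T}\setminus E$ one has $|e^{ix}-P_0(y)|\geq\tfrac12$ uniformly in $x$, hence
\[
\int_{\mathbb{T}\setminus E}\int_{\mathbb{T}}|M(x,y)|^{-p'}\,dx\,dy\leq 2^{p'}\int_{\mathbb{T}\setminus E}\int_{\mathbb{T}}|e^{ix}-P_0(y)|^{p'}|M(x,y)|^{-p'}\,dx\,dy<+\infty,
\]
which, combined with $|e^{ix}-1|\leq 2$, controls the contribution of $\mathbb{T}\setminus E$ after replacing $P_0$ by $1$ there; on $E$ the integrand is unchanged.

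The main obstacle is precisely this last truncation step: the minimality criterion only delivers a measurable $P_0$ a priori, and one must upgrade it to an element of $\Upsilon$ without destroying integrability. The upgrade succeeds because wherever $|P_0(y)|$ is pathologically small or large, the quantity $|e^{ix}-P_0(y)|$ is automatically bounded below, which forces $|M|^{-p'}$ to be integrable on that slice and thus makes it harmless to overwrite $P_0$ by the constant $1$.
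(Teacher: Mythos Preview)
Your argument is correct and follows essentially the same route as the paper: both directions rest on Theorem~\ref{thm2:M} together with Lemma~\ref{lem3.1}, sufficiency is obtained via the explicit dual family $\xi_{k,m}=(2\pi)^{-2}\bigl[e^{ikx}e^{imy}-P^{k}(y)e^{imy}\bigr]\,\overline{M}^{-1}$, and necessity comes from the element $(1,0)$ followed by a truncation of the resulting $P_{0}=-h_{1,0}$ to force membership in $\Upsilon$. The only cosmetic differences are that the paper truncates to the values $\tfrac12$ and $2$ (instead of your constant $1$) and omits the explicit factorisation $e^{ikx}-P^{k}=(e^{ix}-P)\sum_{j}e^{i(k-1-j)x}P^{j}$ that you supply to justify $\xi_{k,m}\in L^{p'}$.
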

\begin{proof}
At first we suppose that (\ref{eq:pe}) holds.
Let
\[
\psi_{j,l}(x,y) = (2\pi )^{-2}  e^{i jx}e^{i ly} [\overline{ M(x,y)}]^{-1} \quad \mbox{for}\quad (j,l)\in \Omega.
\]
One can easily check that the system $\{ \psi_{j,l}(x,y): (j,l)\in \Omega\}\subset L^{p'}(\mathbb{T}^{2})$ is biorthogonal with (\ref{eq:sys0}).

 Now let us suppose that the  function $M(x,y)$ has an $(x,P)-$ singularity of
 degree $p$. Let
\begin{equation}\label{eq:bis}
\xi_{k,m}(x,y) = (2\pi)^{-2} [e^{i kx}e^{i my}- P^{k}(y) e^{i my}] [ \overline{M(x,y)}]^{-1}, \qquad (k,m)\in \Omega.
\end{equation}
Clearly $\xi_{k,m} \in L^{p'}(\mathbb{T}^{2})$ for any $(k,m)\in \Omega$. Moreover, it is easily that the system $\{ \xi_{k,m}(x,y): (k,m)\in \Omega\}$ is biorthogonal with (\ref{eq:sys0}).

Suppose that the system (\ref{eq:sys0})
is minimal in $L^{p}(\mathbb{T}^{2})$. Then by Theorem \ref{thm2:M} we have that the system $\{\varphi_{j,l} \}_{(j,l)\in \Omega} $ biorthogonal with (\ref{eq:sys0}) is defined by the equations (\ref{eq:bisy})
and $g_{k,m}\in S_{\Omega}$. If $g_{0,1}(x,y) = 0$ a.e. then (\ref{eq:pe}) holds.
If $\frac{1}{M} \notin L^{p'}(\mathbb{T}^{2})$ then $g_{0,1}$ is a non trivial  function and by Lemma \ref{lem3.1}  it we have that $g_{0,1}(x,y) = h_{0,1}(y)$. Let
\begin{eqnarray*}
P(y)  =   - h_{1,0}(y) \  \ & \mbox{if} \  \  & \frac{1}{2}<|h_{1,0}(y)|<2;\\
 P(y)  =                     \frac{1}{2} \, &\mbox{if}\ \  &|h_{1,0}(y)|\leq \frac{1}{2}\  \   \mbox{and} \\
                    2 \ &\mbox{if}\  \   &|h_{1,0}(y)|\geq 2.
\end{eqnarray*}
Clearly $P\in \Upsilon$ and  by the relation
\[
\frac{ e^{i x} + g_{1,0}(y) }{(2\pi)^{2}\overline{M(x,y)}} \in L^{p'}(\mathbb{T}^{2}).
\]
 it is easy to check that $M(x,y)$ has an $(x,P)-$ singularity of
 degree $p$.
\end{proof}
\begin{definition}\label{df2:4}
    We say that  $M\in L^{p}(\mathbb{T}^{2})$ has a strong $(x,P)-$singu\-lar\-ity of
 degree $p\, (1\leq p <\infty)$   if $M$ has a strong $x-$singularity  and an $(x,P)-$sin\-gu\-lar\-ity of
 degree $p $ for some $P\in \Upsilon$.
\end{definition}
\begin{proposition}\label{pr3:CM}
Let $1\leq p <\infty$ and  let $M\in L^{p}(\mathbb{T}^{2})$.  Suppose that  $\Omega^{c} =0\times \mathbb{Z} $. Then the system (\ref{eq:sys0})
is complete and minimal in $L^{p}(\mathbb{T}^{2})$ if and only if  the  function $M(x,y)$ has a strong  $(x,P)-$ singularity of
 degree $p$ with $|P(y)|\equiv 1$ a.e. on $\mathbb{T}$.
\end{proposition}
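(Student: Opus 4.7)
The plan is to simply combine Propositions \ref{pr2:C} and \ref{pr2:M}, with the only real work being the extra condition $|P(y)|\equiv 1$ a.e.

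For sufficiency, suppose $M$ has a strong $(x,P)$-singularity of degree $p$ with $|P|\equiv 1$ a.e. The strong $x$-singularity part directly gives completeness of (\ref{eq:sys0}) by Proposition \ref{pr2:C}. For minimality via Proposition \ref{pr2:M}, I note that taking $E=\mathbb{T}$ in the definition of strong $x$-singularity rules out (\ref{eq:pe}); but the $(x,P)$-singularity is exactly the second alternative in Proposition \ref{pr2:M}, so minimality follows. The condition $|P|\equiv 1$ is not needed in this direction (any $P\in\Upsilon$ suffices); it only appears in the reverse direction.

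For necessity, assume (\ref{eq:sys0}) is complete and minimal. Proposition \ref{pr2:C} gives strong $x$-singularity. Proposition \ref{pr2:M} gives either $M^{-1}\in L^{p'}(\mathbb{T}^2)$ or an $(x,P)$-singularity; the first possibility is excluded by applying strong $x$-singularity with $E=\mathbb{T}$, so $M$ has an $(x,P)$-singularity for some $P\in\Upsilon$. Hence $M$ has a strong $(x,P)$-singularity.

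The main (and only nontrivial) step is showing that $|P(y)|=1$ a.e. I would argue by contradiction: suppose there exists $E\subset\mathbb{T}$ with $|E|>0$ and $\delta>0$ such that $\bigl||P(y)|-1\bigr|\geq\delta$ on $E$ (such $E$ exists if $|P|\not\equiv 1$, since $P,1/P\in L^\infty$). The reverse triangle inequality yields
\[
|e^{ix}-P(y)|\;\geq\;\bigl||e^{ix}|-|P(y)|\bigr|\;=\;\bigl|1-|P(y)|\bigr|\;\geq\;\delta
\]
for every $x\in\mathbb{T}$ and every $y\in E$. Therefore, using (\ref{eq:xP}),
\[
\delta^{p'}\int_{\mathbb{T}\times E}\frac{dx\,dy}{|M(x,y)|^{p'}}\;\leq\;\int_{\mathbb{T}\times E}\frac{|e^{ix}-P(y)|^{p'}}{|M(x,y)|^{p'}}\,dx\,dy\;\leq\;\int_{\mathbb{T}^{2}}\frac{|e^{ix}-P(y)|^{p'}}{|M(x,y)|^{p'}}\,dx\,dy\;<\;+\infty,
\]
so $M^{-1}\in L^{p'}(\mathbb{T}\times E)$, contradicting the strong $x$-singularity of $M$. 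Hence $|P(y)|=1$ a.e., which completes the proof.
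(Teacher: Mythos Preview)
Your proof is correct and follows essentially the same approach as the paper: both directions rely on Propositions~\ref{pr2:C} and~\ref{pr2:M}, and the key step---showing $|P(y)|=1$ a.e.---is done by the same contradiction argument using the reverse triangle inequality to get a uniform lower bound $|e^{ix}-P(y)|\geq\delta$ on $\mathbb{T}\times E$, forcing $M^{-1}\in L^{p'}(\mathbb{T}\times E)$ against the strong $x$-singularity. Your write-up is in fact more explicit than the paper's, which leaves the inequality step and the sufficiency direction essentially unspoken.
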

\begin{proof}
By Propositions \ref{pr2:C} and \ref{pr2:M} we have to show that if the system (\ref{eq:sys0})
is complete and minimal in $L^{p}(\mathbb{T}^{2})$ then the conditions of the proposition hold with $|P(y)|\equiv 1$ a.e. on $\mathbb{T}$. We provide the proof by
 reduction to absurdity. Suppose that $|P(y)| \neq 1$ if $y\in E, |E|>0$. Then for some $\delta >0$ we have that $| |P(y)| - 1| > \delta$ if $y\in F\subset E, |F|>0$. On the other hand we have that (\ref{eq:xP}) holds. Hence,
 \[
  \int_{\mathbb{T}\times F} \frac{1}{|M(x,y)|^{p'}} dx dy < +\infty
 \]
 which contradicts the condition that $M(x,y)$ has a strong $x-$singularity of
 degree $p$.
 The proof of sufficiency is obvious.
\end{proof}

\begin{lemma}\label{lem:bi}
Let $1\leq p <\infty$ and   $\Omega \subset \mathbb{Z}^{2}$ is such that $\Omega^{c} =0\times \mathbb{Z} $. Suppose that $M\in L^{p}(\mathbb{T}^{2})$ has a strong  $(x,P)-$singularity of
 degree $p$ with $|P(y)|\equiv 1$. Then the system (\ref{eq:sys0}) is  complete minimal in $L^{p}(\mathbb{T}^{2})$ and its conjugate   system $\{ \xi_{j,l}(x,y): (j,l)\in \Omega\}$
  is defined by the conditions (\ref{eq:bis}) and for any $(n,m)\in \mathbb{Z}^{2}$
\[
e^{inx} e^{imy}(e^{ix} -P(y)) =  \overline{M(x,y)} \xi_{n+1,m}(x,y) - P(y)\overline{M(x,y)} \xi_{n,m}(x,y).
\]
\end{lemma}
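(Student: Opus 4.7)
The first assertion, that the system (\ref{eq:sys0}) is complete and minimal in $L^{p}(\mathbb{T}^{2})$, is immediate from Proposition \ref{pr3:CM}: the hypothesis that $M$ has a strong $(x,P)$-singularity of degree $p$ with $|P(y)|\equiv 1$ is precisely the criterion given there. What remains is to identify the (unique, by completeness) biorthogonal system and to verify the algebraic identity.

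For the biorthogonality step I would first check that the candidates $\xi_{k,m}$ defined by (\ref{eq:bis}) lie in $L^{p'}(\mathbb{T}^{2})$ for every $(k,m)\in\Omega$ (so $k\ne 0$, since $\Omega^{c}=0\times\mathbb{Z}$). The key ingredient is the pointwise bound
\[
|e^{ikx}-P(y)^{k}|\le |k|\,|e^{ix}-P(y)|,\qquad k\in\mathbb{Z}\setminus\{0\},
\]
valid because $|P(y)|=1$. For $k>0$ this follows from the factorization
\[
e^{ikx}-P^{k} = (e^{ix}-P)\sum_{j=0}^{k-1}e^{i(k-1-j)x}P^{j},
\]
each summand having modulus one; the case $k<0$ reduces to the previous one via the identity $e^{ikx}-P^{k}=-e^{ikx}P^{k}(e^{-ikx}-P^{-k})$ together with $|e^{ikx}P^{k}|=1$. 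Condition (\ref{eq:xP}) supplied by the $(x,P)$-singularity then gives $\xi_{k,m}\in L^{p'}(\mathbb{T}^{2})$, since $|e^{imy}|=1$. Biorthogonality is a direct Fubini computation: in $\int_{\mathbb{T}^{2}} M\,e^{ikx}e^{imy}\overline{\xi_{j,l}}\,dx\,dy$ the factor $M/\overline{M}$ cancels, the $\overline{e^{ijx}e^{ily}}$ summand produces $\delta_{k,j}\delta_{m,l}$, and the $\overline{P^{j}(y)e^{ily}}$ summand contains $\int_{\mathbb{T}}e^{ikx}dx=0$ because $k\ne 0$.

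The final identity is pure algebra: substituting the definition (\ref{eq:bis}) into $\overline{M(x,y)}\xi_{n+1,m}-P(y)\overline{M(x,y)}\xi_{n,m}$ the factors $\overline{M}$ cancel, the $P^{n+1}(y)$ contributions telescope away, and one is left with (a constant multiple of) $e^{inx}e^{imy}(e^{ix}-P(y))$, as claimed. The only genuinely nontrivial point in the whole argument is the pointwise estimate above, whose critical ingredient is $|P|\equiv 1$; without that hypothesis the factored sum would not be bounded by $|k|$ and the $L^{p'}$-integrability of $\xi_{k,m}$ could not be read off from (\ref{eq:xP}) alone.
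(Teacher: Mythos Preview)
Your argument is correct and follows essentially the same route as the paper: invoke Proposition~\ref{pr3:CM} for completeness/minimality, appeal to the construction in Proposition~\ref{pr2:M} for the biorthogonal system, and verify the identity by direct substitution. You supply more detail than the paper does---in particular the factorization bound $|e^{ikx}-P^{k}|\le |k|\,|e^{ix}-P|$ that justifies $\xi_{k,m}\in L^{p'}$, which the paper subsumes under ``Clearly'' in Proposition~\ref{pr2:M}---and your parenthetical ``(a constant multiple of)'' correctly flags the $(2\pi)^{-2}$ factor present in \eqref{eq:bis} but suppressed in the displayed identity.
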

\begin{proof}
The first part of the lemma follows by Proposition \ref{pr3:CM} and the proof of Proposition \ref{pr2:M}.

 For any $(n,m)\in \mathbb{Z}^{2}$ we write
 \begin{eqnarray*}
 e^{inx} e^{imy}(e^{ix} -P(y))  &- \overline{M(x,y)} \xi_{n+1,m}(x,y) \\
  =  -P(y)e^{inx} e^{imy} + P^{n+1}(y) e^{i my} = &- \overline{M(x,y)} P(y) \xi_{n,m}(x,y).
\end{eqnarray*}
\end{proof}

\begin{thm}\label{thm:MB2}
Let $1\leq p <\infty$ and   $\Omega \subset \mathbb{Z}^{2}$ is such that $\Omega^{c} =0\times \mathbb{Z} $. Suppose that    is such that  Then the system  (\ref{eq:sys0}) is an $M-$basis in $L^{p}(\mathbb{T}^{2})$ if and only if $M\in L^{p}(\mathbb{T}^{2})$ has a strong  $(x,P)-$singularity of
 degree $p$ with $|P(y)|\equiv 1$.
\end{thm}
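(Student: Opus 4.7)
\emph{Proof plan.} The necessity direction is immediate: an $M$-basis is by definition complete and minimal (with a total biorthogonal system), so Proposition \ref{pr3:CM} gives the stated condition on $M$ at once. For the sufficiency, I plan to use Proposition \ref{pr3:CM} and Lemma \ref{lem:bi} to obtain that the system is complete and minimal with biorthogonal $\{\xi_{k,m}\}_{(k,m)\in\Omega}$ explicitly given by (\ref{eq:bis}), and then to prove that this biorthogonal is total in $L^{p}(\mathbb{T}^{2})$; that is, any $f\in L^{p}(\mathbb{T}^{2})$ with $\int_{\mathbb{T}^{2}} f\,\overline{\xi_{k,m}}\,dxdy=0$ for every $(k,m)\in\Omega$ must vanish a.e.

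The central device is the auxiliary function $h(x,y):=f(x,y)(e^{-ix}-\overline{P(y)})/M(x,y)$. By the $(x,P)$-singularity hypothesis $(e^{-ix}-\overline{P(y)})/M(x,y)\in L^{p'}(\mathbb{T}^{2})$, so H\"older's inequality gives $h\in L^{1}(\mathbb{T}^{2})$ and hence $h(\cdot,y)\in L^{1}(\mathbb{T})$ for a.e.\ $y$. Writing $\overline{\xi_{k,m}}=(2\pi)^{-2}[e^{-ikx}-\overline{P(y)}^{k}]e^{-imy}/M(x,y)$ and invoking Fubini, the orthogonality hypothesis is equivalent to the statement that for each fixed $k\neq 0$ the $L^{1}(\mathbb{T})$ function
\[
G_{k}(y):=\int_{\mathbb{T}}\frac{f(x,y)\,[e^{-ikx}-\overline{P(y)}^{k}]}{M(x,y)}\,dx
\]
has all its Fourier coefficients equal to zero, whence $G_{k}(y)=0$ for a.e.\ $y$; a countable intersection of full-measure sets yields a single null set outside which $G_{k}(y)=0$ for every $k\neq 0$ simultaneously. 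For $l\geq 1$ the factorization $e^{-ilx}-\overline{P(y)}^{l}=(e^{-ix}-\overline{P(y)})\sum_{j=0}^{l-1}\overline{P(y)}^{l-1-j}e^{-ijx}$ rewrites $G_{l}(y)=\sum_{j=0}^{l-1}\overline{P(y)}^{l-1-j}c_{j}(y)$, where $c_{j}(y):=\int_{\mathbb{T}}h(x,y)e^{-ijx}\,dx$, and a straightforward upper-triangular induction on $l$ produces $c_{j}(y)=0$ for every $j\geq 0$ and a.e.\ $y$.

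The index $k=-l$ with $l\geq 1$ is where $|P(y)|\equiv 1$ becomes indispensable: using $\overline{P}^{-l}=P^{l}$ I factor $e^{ilx}-P(y)^{l}=(e^{ix}-P(y))\sum_{j=0}^{l-1}P(y)^{l-1-j}e^{ijx}$ and introduce the companion $\tilde h(x,y):=f(x,y)(e^{ix}-P(y))/M(x,y)$, which satisfies the identity $\tilde h=-e^{ix}P(y)\,h$ (a direct consequence of $\overline{P}=1/P$). Substituting gives $\int_{\mathbb{T}}\tilde h(x,y)e^{ijx}\,dx=-P(y)\,c_{-(j+1)}(y)$, so the parallel upper-triangular induction delivers $c_{-i}(y)=0$ for every $i\geq 1$ and a.e.\ $y$. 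Combining the two, every $x$-Fourier coefficient of $h(\cdot,y)$ vanishes for a.e.\ $y$, so $h\equiv 0$ a.e.; since $M$ is a.e.\ finite while $\{(x,y):e^{-ix}=\overline{P(y)}\}$ is the graph of a measurable function and thus a plane-null set, this forces $f=0$ a.e. I expect the main obstacle to lie precisely in this interplay between the positive- and negative-index inductions: the hypothesis by itself constrains the $x$-Fourier coefficients of $h(\cdot,y)$ only for $j\geq 0$, and the assumption $|P|\equiv 1$ is exactly what makes the identity $\tilde h=-e^{ix}Ph$ available to transfer the remaining information onto the negative-index coefficients of the same $h$.
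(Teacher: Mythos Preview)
Your argument is correct. The necessity direction matches the paper's verbatim, and for sufficiency your self-contained Fourier-coefficient computation via the auxiliary function $h(x,y)=f(x,y)(e^{-ix}-\overline{P(y)})/M(x,y)$ and the triangular inductions in $l$ (positive and negative) is sound; the identity $\tilde h=-e^{ix}P\,h$ indeed follows from $P\overline P=1$ and is the correct hinge that transfers the information to the negative-index coefficients.

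The paper argues differently for totality of the biorthogonal system. After deducing (as you do) that $G_{k}(y)=0$ a.e.\ for every $k\neq 0$, it does not manipulate Fourier coefficients of $h$; instead it fixes a typical $y_{0}$ for which $f(\cdot,y_{0})\in L^{p}(\mathbb{T})$, $\int_{\mathbb{T}}|M(x,y_{0})|^{-p'}\,dx=+\infty$, and $\int_{\mathbb{T}}|e^{ix}-P(y_{0})|^{p'}|M(x,y_{0})|^{-p'}\,dx<+\infty$, writes $P(y_{0})=e^{ix_{0}}$, and then invokes the one-dimensional totality result from \cite{KSK:3} to conclude $f(\cdot,y_{0})=0$ a.e.; integrating over $y_{0}$ finishes. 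So the paper's route is a reduction-to-1D argument that leans on an external reference, whereas your route is fully self-contained and purely algebraic once $h\in L^{1}$ is in hand. Your approach has the practical advantage that it does not require verifying the a.e.-$y$ slice conditions needed to feed into the 1D theorem; the paper's approach has the advantage of making the structural link with the known one-variable theory explicit.
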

\begin{proof}
If the system  (\ref{eq:sys0}) is an $M-$basis in $L^{p}(\mathbb{T}^{2})$  then by  Proposition \ref{pr3:CM} it follows that $M$ has a strong  $(x,P)-$singularity of  degree $p$ with $|P(y)|\equiv 1$. On the other hand if
 the  function $M(x,y)$ has a strong  $(x,P)-$ singularity of degree $p$ with $|P(y)|\equiv 1$ a.e. on $\mathbb{T}$ then by  Proposition \ref{pr3:CM} the system  (\ref{eq:sys0}) is complete and minimal in $L^{p}(\mathbb{T}^{2})$ and
the system $\{ \xi_{j,l}(x,y): (j,l)\in \Omega\}$ conjugate to (\ref{eq:sys0}) is defined by the equations (\ref{eq:bis}).
 Let $f\in L^{p}(\mathbb{T}^{2})$ be such that
\[
\int_{\mathbb{T}^{2}} f(x,y)\overline{\xi_{j,l}}(x,y) dx dy =0, \quad \mbox{for all}\quad (j,l)\in \Omega.
\]
Then by (\ref{eq:sys0}) and the Fubini-Tonelli theorem we will have that
\[
\int_{\mathbb{T}} e^{-i ly}  \int_{\mathbb{T}} f(x,y)[e^{-i jx}- \overline{P^{j}(y)}][\overline{M(x,y)}]^{-1} dx dy =0, \quad \forall \, l\in \mathbb{Z}, j\in \mathbb{Z}_{0}.
\]
Which yields $\Phi_{j}(y) = 0$ a.e. on $\mathbb{T}$ for all $j\in \mathbb{Z}_{0},$ where
\[
\Phi_{j}(y) =  \int_{\mathbb{T}} f(x,y)[e^{-i jx}- \overline{P^{j}(y)}][\overline{M(x,y)}]^{-1} dx \qquad  j\in \mathbb{Z}_{0}.
\]

Let $y_{0}\in \mathbb{T}$ be such that the following conditions hold:
 \[
 \int_{\mathbb{T}} |f(x,y_{0})|^{p} dx<  +\infty,\ \Phi_{j}(y_{0}) = 0 \ \mbox{ for all} \ j\in \mathbb{Z}_{0},
 \]
  and
\[
 \int_{\mathbb{T}} \frac{1}{|M(x,y_{0})|^{p'}} dx  = +\infty,\quad  \int_{\mathbb{T}} \frac{|e^{ix} -P(y_{0})|^{p'}}{M(x,y_{0})|^{p'}} dx  < +\infty.
\]
Thus we have that $P(y_{0}) = e^{ix_{0}}$ for some $x_{0}\in \mathbb{T}$ and
\[
\int_{\mathbb{T}} f(x,y_{0})[e^{-i jx}- e^{-i jx_{0}}][\overline{M(x,y_{0})}]^{-1} dx =0 \qquad  \mbox{for all}\quad j\in \mathbb{Z}_{0},
\]
where $f(\cdot,y_{0}) \in L^{p}(\mathbb{T})$.   According   to the corresponding result in the one dimensional case (see \cite{KSK:3}) it follows that $ f(x,y_{0})= 0$ for almost any $x\in \mathbb{T}$. On the other hand we have that the above conditions  are true for almost all $y\in \mathbb{T}$. Which yields that
 $f=0$,  a.e. on $\mathbb{T}^{2}.$
\end{proof}

\begin{example}
Let
\[
M(x,y) = |\sin\frac{x-x_{0}}{2}|^{\alpha} \ \ \mbox{for} \ (x,y) \in \mathbb{T}^{2},
\]
 where $x_{0}\in \mathbb{T}$ and $\frac{1}{p'} \leq \alpha < 1+ \frac{1}{p'}$. It is easy to check that $m$ has
a strong  $(x,P)-$singularity of
 degree $p$ with $P(y)= e^{ix_{0}}$ if $y\in \mathbb{T}$. By Theorem \ref{thm:MB2} it follows that the system (\ref{eq:sys0}) is an $M-$basis in in $L^{p}(\mathbb{T}^{2})$ with the conjugate system
 \[
 \xi_{k,m}(x,y) = (2\pi)^{-2} [e^{i kx}e^{i my}-  e^{i kx_{0}}e^{i my}] [ \overline{M(x,y)}]^{-1}, \qquad (k,m)\in \Omega.
 \]
\end{example}

\subsection{The case $\Omega^{c} = 0\times\mathbb{Z}_{0}$}

\

In the cases studied above we have that if the system $\{  M(x,y) e^{i kx}e^{i my}: (k,m)\in \Omega \}$ is complete and minimal in $ L^{p}(\mathbb{T}^{2})$ then it is  an $M-$basis in $L^{p}(\mathbb{T}^{2})$. Suppose that $\Omega_{0} \subset \mathbb{Z}^{2}$ is such that $\Omega_{0}^{c} = 0\times \mathbb{Z}_{0}$. In this section we prove that if the system
\begin{equation}\label{eq:sys1}
\{  M(x,y) e^{i kx}e^{i my}: (k,m)\in \Omega_{0} \}
\end{equation}
is complete in $ L^{p}(\mathbb{T}^{2})$ then it is not minimal.

\begin{thm}\label{pr:C2}
Let $1\leq p <\infty$ and  let  $M\in L^{p}(\mathbb{T}^{2})$. Suppose that $\Omega_{0} \subset \mathbb{Z}^{2}$ is such that $\Omega_{0}^{c} = 0\times \mathbb{Z}_{0}$. Then the system (\ref{eq:sys1})
is complete in $L^{p}(\mathbb{T}^{2})$ if and only if the  function $M(x,y)$ has a strong $x-$ singularity of
 degree $p$.
\end{thm}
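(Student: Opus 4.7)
The plan is to reduce the theorem to Theorem \ref{thm2:C} exactly as was done for Proposition \ref{pr2:C}, but with one new wrinkle: since $(0,0)\in\Omega_{0}$ (in contrast to the earlier case $\Omega^{c}=0\times\mathbb{Z}$), the subspace $S_{\Omega_{0}}$ is strictly smaller than $S_{\Omega}$, which makes sufficiency automatic but necessity slightly more delicate.

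First I would characterize $S_{\Omega_{0}}$. Since $\Omega_{0}\supset \Omega$ (where $\Omega^{c}=0\times\mathbb{Z}$), we have $S_{\Omega_{0}}\subset S_{\Omega}$, so Lemma \ref{lem3.1} gives that every $g\in S_{\Omega_{0}}$ depends only on $y$, say $g(x,y)=h(y)$ with $h\in L^{1}(\mathbb{T})$; the additional vanishing condition at $(k,m)=(0,0)\in\Omega_{0}$ forces $\int_{\mathbb{T}}h(y)\,dy=0$.

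For the sufficiency direction, suppose $M$ has a strong $x$-singularity of degree $p$ and let $g\in S_{\Omega_{0}}$ satisfy $g/\overline{M}\in L^{p'}(\mathbb{T}^{2})$. Writing $g(x,y)=h(y)$ and applying Fubini-Tonelli,
\[
\int_{\mathbb{T}}|h(y)|^{p'}\left(\int_{\mathbb{T}}|M(x,y)|^{-p'}dx\right)dy<+\infty.
\]
Let $G=\{y\in\mathbb{T}:|h(y)|>0\}$; if $|G|>0$ we would deduce $M^{-1}\in L^{p'}(\mathbb{T}\times G)$, contradicting the strong $x$-singularity. Hence $h=0$ a.e., so $g=0$ a.e., and Theorem \ref{thm2:C} delivers completeness.

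For the necessity I argue by contrapositive, and this is the step requiring care. Assume $M$ does not have a strong $x$-singularity of degree $p$: choose $E\subset\mathbb{T}$ with $|E|>0$ such that $\int_{\mathbb{T}\times E}|M|^{-p'}dxdy<+\infty$. Unlike in Proposition \ref{pr2:C}, the natural candidate $g=\chi_{E}(y)$ need not lie in $S_{\Omega_{0}}$ because its zeroth Fourier coefficient equals $|E|\neq 0$. To fix this, split $E=F_{1}\sqcup F_{2}$ into two disjoint measurable pieces of positive measure and set
\[
h(y)=\chi_{F_{1}}(y)-\frac{|F_{1}|}{|F_{2}|}\chi_{F_{2}}(y),\qquad g(x,y)=h(y).
\]
Then $h\in L^{\infty}(\mathbb{T})$, $h\not\equiv 0$, $\int_{\mathbb{T}}h\,dy=0$, and $h$ is supported in $E$, so $g\in S_{\Omega_{0}}\setminus\{0\}$ and
\[
\int_{\mathbb{T}^{2}}\frac{|g(x,y)|^{p'}}{|M(x,y)|^{p'}}dxdy\leq \max\bigl(1,(|F_{1}|/|F_{2}|)^{p'}\bigr)\int_{\mathbb{T}\times E}|M|^{-p'}dxdy<+\infty.
\]
Thus $g/\overline{M}\in L^{p'}(\mathbb{T}^{2})$, violating the condition of Theorem \ref{thm2:C}, so the system cannot be complete. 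The only obstacle worth flagging is the mean-zero constraint just handled; everything else is a direct imitation of the argument given for Proposition \ref{pr2:C}.
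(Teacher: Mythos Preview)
Your proof is correct and follows essentially the same approach as the paper. The only cosmetic differences are that the paper dispatches sufficiency by citing Proposition~\ref{pr2:C} together with the inclusion $\Omega_{0}\supset\Omega$, and for necessity it splits $E$ into two halves of \emph{equal} measure so that $|\chi_{E_{1}}-\chi_{E\setminus E_{1}}|=\chi_{E}$ exactly, giving $\int_{\mathbb{T}\times E}|M|^{-p'}=+\infty$ directly.
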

\begin{proof}
By Proposition \ref{pr2:C} it is clear that if
 the weight function $w(x,y)$ has a strong $x-$ singularity of
 degree $p $  then the system (\ref{eq:sys1}) is complete in $ L^{p}(\mathbb{T}^{2})$.

For the proof of the necessity suppose that the system (\ref{eq:sys1}) is complete in $ L^{p}(\mathbb{T}^{2})$. By Theorem \ref{thm2:C} we have that for any non trivial $g\in S_{\Omega_{0}}$
\[
\int_{\mathbb{T}^{2}} \frac{|g(y)|^{p'}}{|M(x,y)|^{p'}} dx dy = +\infty.
\]
Let $E\subset \mathbb{T}, |E|>0$ be any measurable set and $E_{1}\subset E$ be such that $|E_{1}| = \frac{1}{2} |E|.$ It is easy to observe that  we have that $\chi_{E_{1}}(y) - \chi_{E\setminus E_{1}}(y) \in S_{\Omega_{0}}$ which yields that $M(x,y)$ has a strong $x-$ singularity of
 degree $p$.
\end{proof}

\begin{proposition}\label{pr:CM2}
Let $1\leq p <\infty$ and  suppose that $\Omega_{0} \subset \mathbb{Z}^{2}$ is such that $\Omega_{0}^{c} =0\times \mathbb{Z}_{0}$. Then for any  function $M\in L^{p}(\mathbb{T}^{2})$ the system (\ref{eq:sys})
is not complete minimal in $L^{p}(\mathbb{T}^{2})$.
\end{proposition}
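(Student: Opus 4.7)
The plan is to argue by contradiction: assume the system (\ref{eq:sys1}) is simultaneously complete and minimal in $L^{p}(\mathbb{T}^{2})$, and derive an incompatibility from the minimality criterion applied at the single index $(0,0)\in\Omega_{0}$. The crucial feature of this case is that $(0,0)$ belongs to $\Omega_{0}$ (since $\Omega_{0}^{c}=\{0\}\times\mathbb{Z}_{0}$ excludes the origin), whereas in the case $\Omega^{c}=\{0\}\times\mathbb{Z}$ treated in Proposition \ref{pr3:CM} it did not; this single difference forces a zero-mean restriction on $S_{\Omega_{0}}$ that will collide with the strong $x$-singularity produced by completeness.

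First I would invoke Theorem \ref{pr:C2} to conclude from completeness that $M$ has a strong $x$-singularity of degree $p$. Setting $\phi(y)=\int_{\mathbb{T}}|M(x,y)|^{-p'}\,dx$, this amounts to $\int_{E}\phi(y)\,dy=+\infty$ for every measurable $E\subset\mathbb{T}$ with $|E|>0$. A routine slicing argument using the level sets $\{y:\phi(y)\le n\}$---each of which must have measure zero, or else the integral over it would be finite---upgrades this to $\phi(y)=+\infty$ for almost every $y\in\mathbb{T}$.

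Next I would describe $S_{\Omega_{0}}$ concretely. Because $\Omega_{0}^{c}=\{0\}\times\mathbb{Z}_{0}$, a function $g\in S_{\Omega_{0}}$ has all Fourier coefficients $\hat g(k,m)$ vanishing for $(k,m)\in\Omega_{0}$; in particular for every $k\ne 0$ and for $(k,m)=(0,0)$. Repeating the argument of Lemma \ref{lem3.1} verbatim yields $g(x,y)=h(y)$ a.e.\ for some $h\in L^{1}(\mathbb{T})$, and the additional condition $\hat g(0,0)=0$ translates into the key constraint $\int_{\mathbb{T}}h(y)\,dy=0$.

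Finally I would apply the minimality criterion (Theorem \ref{thm2:M}) at the index $(0,0)\in\Omega_{0}$: there must exist $g_{0,0}\in S_{\Omega_{0}}$, i.e.\ a zero-mean $h\in L^{1}(\mathbb{T})$, such that
\[
\int_{\mathbb{T}^{2}}|1+h(y)|^{p'}|M(x,y)|^{-p'}\,dx\,dy<+\infty.
\]
By Fubini-Tonelli this equals $\int_{\mathbb{T}}|1+h(y)|^{p'}\phi(y)\,dy$, and since $\phi=+\infty$ a.e.\ we are forced to have $h(y)=-1$ a.e., whence $\int h=-2\pi\ne 0$, contradicting the zero-mean property. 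The only step that I expect to require any care is the deduction that $\phi\equiv+\infty$ a.e.\ from the strong $x$-singularity hypothesis; after that the argument reduces to a one-line clash between this rigidity and the only candidate $h\in S_{\Omega_{0}}$ that could possibly satisfy the $(0,0)$-minimality condition.
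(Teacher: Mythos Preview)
Your proposal is correct and follows essentially the same route as the paper: assume complete minimality, invoke Theorem \ref{pr:C2} to get the strong $x$-singularity, then apply Theorem \ref{thm2:M} at the index $(0,0)$ to obtain $g_{0,0}\in S_{\Omega_{0}}$ with $(1+g_{0,0})/\overline{M}\in L^{p'}(\mathbb{T}^{2})$, and reach a contradiction. The paper is terser---it simply writes the Fubini identity $\int_{\mathbb{T}}|1+g_{0,0}(y)|^{p'}\int_{\mathbb{T}}|M(x,y)|^{-p'}\,dx\,dy<\infty$ and declares it incompatible with the strong $x$-singularity---whereas you spell out the intermediate steps (the level-set argument giving $\phi\equiv+\infty$ a.e., the zero-mean characterization of $S_{\Omega_{0}}$, and the forced value $h\equiv-1$), which is exactly what the paper's one-line ``contradicts'' is hiding.
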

\begin{proof}
Suppose that for a  function $M(x,y)$ the system (\ref{eq:sys})
is  complete minimal in $L^{p}(\mathbb{T}^{2})$.
By Proposition \ref{pr:C2} we have that  $M(x,y)$ has a strong $x-$ singularity of
 degree $p$. By  Theorem \ref{thm2:M} it follows that there exists  $g_{0,0} \in S_{\Omega_{0}}$ such that
 \[
 \frac{1+ g_{0,0}(y)}{M(x,y)} \in L^{p'}(\mathbb{T}^{2}).
 \]
Which yields that
\[
 \int_{\mathbb{T}}|1+ g_{0,0}(y)|^{p'} \int_{\mathbb{T}} \frac{1}{|M(x,y)|^{p'}} dx dy < +\infty.
 \]
The last condition
  contradicts  the condition that $M(x,y)$ has a strong $x-$ singularity of
 degree $p$.
\end{proof}
We say that $g\in \Upsilon_{0}$ if $g\in \Upsilon$ and $\int_{\mathbb{T}} g(t) dt = 0.$
By Lemma \ref{lem3.1} it easily follows that if $g\in S_{\Omega_{0} }$ then $g(x,y) = h(y)$, where $h\in L^{1}(\mathbb{T})$ and $\int_{\mathbb{T}} h(y) dy = 0.$
\begin{proposition}\label{pr4:M}
Let $1\leq p <\infty$ and  let $M\in L^{p}(\mathbb{T}^{2})$.  Then the system (\ref{eq:sys})
is minimal in $L^{p}(\mathbb{T}^{2})$ if and only if or holds the condition (\ref{eq:pe})
or the  function $M(x,y)$ has an $(x,P)-$sin\-gularity of
 degree $p$ with $P\in \Upsilon_{0}$ and for some $Q\in \Upsilon_{0}$
\begin{equation}\label{eq:up0}
\int_{\mathbb{T}^{2}}|1- Q(y)|^{p'}|M(x,y)|^{-p'} dx dy < \infty.
\end{equation}
\end{proposition}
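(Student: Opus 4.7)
My plan is to reduce everything to the concrete characterisation of minimality provided by Theorem~\ref{thm2:M} together with the description of $S_{\Omega_0}$ stated just before the proposition: the system~(\ref{eq:sys1}) is minimal in $L^p(\mathbb{T}^2)$ if and only if for every $(k,m)\in\Omega_0=\{(0,0)\}\cup\{(k,m):k\ne 0\}$ there exists $h_{k,m}\in L^1(\mathbb{T})$ with $\int_{\mathbb{T}}h_{k,m}=0$ such that $[e^{ikx}e^{imy}+h_{k,m}(y)]/\overline{M(x,y)}\in L^{p'}(\mathbb{T}^2)$. Both directions then reduce to constructing or extracting such a family $\{h_{k,m}\}$.

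For sufficiency, if (\ref{eq:pe}) holds take $h_{k,m}\equiv 0$. Otherwise, given $P,Q\in\Upsilon_0$ as in the hypothesis, handle the index $(0,0)$ by $h_{0,0}:=-Q$ (which lies in $\Upsilon_0\subset L^1$ with $\int h_{0,0}=0$, and $(1-Q)/\overline{M}\in L^{p'}$ by (\ref{eq:up0})). For $k\ne 0$ split
\[
e^{ikx}e^{imy}=\bigl(e^{ikx}-P(y)^k\bigr)e^{imy}+P(y)^ke^{imy}.
\]
Since $P\in\Upsilon$, the factorisation $e^{ikx}-P(y)^k=(e^{ix}-P(y))R_k(x,y)$ with a bounded cofactor $R_k$ (the case $k<0$ follows by multiplying by $e^{-ikx}P(y)^{-k}$, bounded since $|P|$ is bounded below) shows the first summand divided by $\overline{M}$ is in $L^{p'}$ by (\ref{eq:xP}). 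For the second summand set $F(y):=P(y)^ke^{imy}\in\Upsilon$, $c:=(2\pi)^{-1}\int_{\mathbb{T}}F(y)Q(y)\,dy$, and $h_{k,m}(y):=-F(y)Q(y)+c(1-Q(y))$. A direct computation using $\int Q=0$ gives $\int h_{k,m}=0$, while $F+h_{k,m}=(F+c)(1-Q)$, so $(F+h_{k,m})/\overline{M}\in L^{p'}$ by (\ref{eq:up0}) since $F+c$ is bounded.

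For necessity, assume minimality. If $M^{-1}\in L^{p'}(\mathbb{T}^2)$ then (\ref{eq:pe}) holds. Otherwise, applying the characterisation at the indices $(0,0)$ and $(1,0)$ yields $h_{0,0},h_{1,0}\in L^1(\mathbb{T})$ with zero mean such that $(1+h_{0,0})/\overline{M},\,(e^{ix}+h_{1,0})/\overline{M}\in L^{p'}$; set $Q_0:=-h_{0,0}$ and $P_0:=-h_{1,0}$. The main technical obstacle is then to upgrade $P_0,Q_0$ to $\Upsilon_0$ (bounded above and bounded below in modulus, with zero mean) while preserving both $L^{p'}$ integrabilities. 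I adapt the truncation in the proof of Proposition~\ref{pr2:M}: partition $\mathbb{T}=A\cup B\cup C$ according to $|P_0|<\tfrac12$, $\tfrac12\le|P_0|\le 2$, $|P_0|>2$. On $A$ one has $|e^{ix}-P_0(y)|\ge\tfrac12$ and on $C$ one has $|e^{ix}-P_0(y)|\ge 1$, both uniformly in $x$, so $1/M\in L^{p'}(\mathbb{T}\times(A\cup C))$; combined with $1/M\notin L^{p'}(\mathbb{T}^2)$ this forces $|B|>0$. Take $P:=P_0$ on $B$ and, on $A\cup C$, let $P$ be a two-valued simple function on an equal-measure split $D_1\sqcup D_2=A\cup C$ with values $\alpha,\beta$ of modulus $\ge 1$ chosen so that $(\alpha+\beta)|D_1|=-\int_B P_0$ (for example $\alpha=s/2+i,\ \beta=s/2-i$ with $s=-2\int_B P_0/|A\cup C|$). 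Then $P\in\Upsilon_0$, $\int P=0$, and $(e^{ix}-P)/\overline{M}\in L^{p'}$ (unchanged on $B$; bounded by a constant times $1/|M|\in L^{p'}$ on $A\cup C$). The identical construction applied to $Q_0$, using $|1-Q_0|\ge\tfrac12$ on $\{|Q_0|<\tfrac12\}$ and $|1-Q_0|\ge 1$ on $\{|Q_0|>2\}$, produces $Q\in\Upsilon_0$ with $(1-Q)/\overline{M}\in L^{p'}$, completing the proof.
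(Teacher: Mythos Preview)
Your argument is correct and follows the paper's overall strategy (reduce to Theorem~\ref{thm2:M} and the description of $S_{\Omega_0}$, then truncate as in Proposition~\ref{pr2:M}), but in the sufficiency direction you are genuinely more careful than the paper. The paper simply takes $g_{k,m}(y)=-P^k(y)e^{imy}$ for $k\neq 0$ and asserts biorthogonality; however $P\in\Upsilon_0$ only guarantees $\int_{\mathbb{T}}P=0$, not $\int_{\mathbb{T}}P^k(y)e^{imy}\,dy=0$ for all $(k,m)$ with $k\neq 0$, so in general $-P^ke^{im\cdot}\notin S_{\Omega_0}$ and the paper's $\xi_{k,m}$ fails to annihilate $M\cdot 1$. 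Your device of writing $F+h_{k,m}=(F+c)(1-Q)$ with $c=(2\pi)^{-1}\int FQ$ is exactly what is needed to force $\int h_{k,m}=0$ while keeping $(e^{ikx}e^{imy}+h_{k,m})/\overline{M}\in L^{p'}$; this repairs the gap. Likewise, in the necessity direction your two–valued adjustment on $A\cup C$ (rather than the paper's single constant) guarantees $1/P,1/Q\in L^\infty$, which the paper's choice $-\beta/(2\pi-|G_0|)$ does not if $\beta=0$.

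One small point: your explicit choice $\alpha=s/2+i$, $\beta=s/2-i$ only gives $|\alpha|,|\beta|\ge 1$ when $s=-2\int_B P_0/|A\cup C|$ is real; for complex $s$ (e.g.\ $s=-2i$ makes $\alpha=0$) replace it by $\alpha=s/2+r$, $\beta=s/2-r$ with a real $r>|s|/2+1$. Also, you should state explicitly that if $|A\cup C|=0$ then $P_0$ itself already lies in $\Upsilon_0$, so no modification is needed; you use $|B|>0$ but the construction on $A\cup C$ tacitly assumes $|A\cup C|>0$.
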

\begin{proof}
If the condition (\ref{eq:pe}) holds then the proof is similar to the proof of Proposition \ref{pr2:M}.

 Now let us suppose that the  function $M(x,y)$ has an $(x,P)-$ singularity of
 degree $p$ with $P\in \Upsilon_{0}$ and for some $Q\in \Upsilon_{0}$ holds the condition (\ref{eq:up0}).
 Clearly $ \Upsilon_{0} \subset S_{\Omega_{0} }$. Thus if we put
 \[
\xi_{0,0}(x,y) = (2\pi)^{-2} [1- Q(y) e^{i my}] [ \overline{M(x,y)}]^{-1},
 \]
and for $(k,m)\in \Omega_{0}\setminus (0,0)$
\[
\xi_{k,m}(x,y) = (2\pi)^{-2} [e^{i kx}e^{i my}- P^{k}(y) e^{i my}] [ \overline{M(x,y)}]^{-1}.
\]
Clearly $\xi_{k,m} \in L^{p'}(\mathbb{T}^{2})$ for any $(k,m)\in \Omega_{0}$. Moreover, it is easily that the system $\{ \xi_{k,m}(x,y): (k,m)\in \Omega_{0}\}$ is biorthogonal with (\ref{eq:sys1}).

Suppose that the system (\ref{eq:sys1})
is minimal in $L^{p}(\mathbb{T}^{2})$. Then by Theorem \ref{thm2:M} we have that the system $\{\varphi_{j,l} \}_{(j,l)\in \Omega_{0}} $ biorthogonal with (\ref{eq:sys1}) is defined by the equations (\ref{eq:bisy})
and $g_{k,m}\in S_{\Omega_{0}}$. If $g_{0,0}(y) = 0$ a.e. then (\ref{eq:pe}) holds.
If $\frac{1}{M} \notin L^{p'}(\mathbb{T}^{2})$ then $g_{0,0}$ is a non trivial  function and by Lemma \ref{lem3.1}  it we have that $g_{0,0}(x,y) = h_{0,0}(y)$ and $\int_{\mathbb{T}} h_{0,0}(y) dy = 0$. Let $Q(y) = h_{0,0}(y)$ if $y\in G_{0}:=\{y\in \mathbb{T}: \frac{1}{2}<|h_{1,0}(y)|<2 \}$ and let $\beta = \int_{G_{0}} h_{1,0}(y) dy$. If $|G_{0}| = 2\pi$ then the function $Q$ is defined and $Q\in \Upsilon_{0}$. If $|G_{0}| < 2\pi$ then putting $Q(y) = \frac{-\beta}{2\pi - |G_{1}| }$ if $ y\in \mathbb{T}\setminus G_{1}$. Clearly $Q\in \Upsilon_{0}$and the relation (\ref{eq:up0}) holds. In a similar way we define $P\in \Upsilon_{0}$
so that
\[
\frac{ e^{i x} + P(y) }{(2\pi)^{2}\overline{M(x,y)}} \in L^{p'}(\mathbb{T}^{2}).
\]
Thus $M(x,y)$ has an $(x,P)-$ singularity of
 degree $p$.

\end{proof}
%

%
% ---- Bibliography ----
%

\end{document}